\documentclass[11pt,dvipsnames]{article}
\usepackage[utf8]{inputenc}
\usepackage[T1]{fontenc}
\usepackage{comment}
\usepackage{authblk}
\usepackage{relsize}
\usepackage{changepage}
\usepackage{mathtools}
\usepackage{booktabs}
\usepackage{multirow}
\usepackage{array}
\usepackage{enumerate}
\usepackage{enumitem}
\usepackage{mathtools}
\usepackage[english]{babel}
\usepackage{amsmath}
\usepackage{mathrsfs}
\usepackage{amsthm}
\usepackage{amsfonts}
\usepackage{url}
\usepackage{amssymb}
\usepackage{graphicx}
\usepackage{tabularx}
\usepackage[numbers]{natbib}
\usepackage{tikz}
\usetikzlibrary{positioning}
\usepackage{color}
\theoremstyle{plain}
\newtheorem{theorem}{Theorem}[section]
 
\newtheorem{defn}[theorem]{Definition}
\newtheorem{proposition}[theorem]{Proposition}
\newtheorem{lemma}[theorem]{Lemma}

\newtheorem{example}[theorem]{Example}

\makeatletter 
\newcommand{\vast}{\bBigg@{4}}
\newcommand{\Vast}{\bBigg@{5}} 
\makeatother 
\definecolor{bulgarianrose}{rgb}{0.28, 0.02, 0.03} 
\definecolor{gray}{rgb}{0.5, 0.5, 0.5} 
 
\theoremstyle{definition}

\newtheorem*{conjecture}{Conjecture}

\theoremstyle{remark} 
\newtheorem*{remark}{Remark} 
\usepackage[left=2cm,right=2cm,top=2cm,bottom=2cm]{geometry} 
\usepackage{amssymb} 
\makeatletter 
\def\namedlabel#1#2{\begingroup
    #2%
    \def\@currentlabel{#2}%
    \phantomsection\label{#1}\endgroup
} 
\usepackage[normalem]{ulem} 
\usepackage{dsfont} 
\usepackage{accents} 
\usepackage{verbatim} 
\usepackage{ulem} 
\usepackage{pgf,tikz,pgfplots} 
\pgfplotsset{compat = 1.16} 
\usepackage[all]{xy} 
 
\usepackage{stackengine} 
\stackMath 
\newcommand\tsup[2][2]{%
 \def\useanchorwidth{T}%
  \ifnum#1>1%
    \stackon[-.5pt]{\tsup[\numexpr#1-1\relax]{#2}}{\scriptscriptstyle\sim}%
  \else%
    \stackon[.5pt]{#2}{\scriptscriptstyle\sim}%
  \fi%
}

\usepackage{enumerate} 
 
\usepackage{amsmath} 
\usepackage{amsfonts} 
\usepackage{amsthm} 
\usepackage{amssymb} 
\usepackage{array} 
\usepackage{booktabs} 
\usepackage{changepage} 
\usepackage[english]{babel} 
\usepackage{enumerate} 
\usepackage{epsfig} 
\usepackage{float} 
\usepackage{framed} 
\usepackage{gensymb} 
\usepackage{graphicx} 
\usepackage[colorlinks=true, allcolors=.]{hyperref}
\usepackage{indentfirst} 
\usepackage{longtable} 
\usepackage{mathtools} 
\usepackage{multirow} 
\usepackage[normalem]{ulem} 
\usepackage{setspace} 
\usepackage{subcaption} 
\usepackage{textcomp} 
\usepackage[utf8]{inputenc} 
\usepackage{verbatim} 
\usepackage{enumerate} 
\usepackage{xlop} 
\usepackage{listings} 
\usepackage[none]{hyphenat}

\newcommand{\sym}{\operatorname{Sym}}
\newcommand{\fsym}{\operatorname{FSym}}
\newcommand{\alt}{\operatorname{Alt}}
\newcommand{\aut}{\operatorname{Aut}}
 
\def\and{%
  \end{tabular}%
  \hskip 1em \@plus.17fil%
  \begin{tabular}[t]{c}}%
  
\title{\scshape Solvability of meromorphic equations in elementary functions}  
 
\author{Miroslav Marinov\footnote{Institute of Mathematics and Informatics, Bulgarian Academy of Sciences, m.marinov1617@gmail.com} \ \ \ Nikola Veselinov\footnote{HSSIMI, Bulgarian Academy of Sciences, nikola.veselinov.veselinov@gmail.com}}
\date{} 
\begin{document}

 \maketitle

\begin{abstract}  
    An equation $f(x)=a$, where $f$ is a complex meromorphic function and $a\in\mathbb{C}$ is a parameter, is solvable in elementary functions if the inverse map $x=f^{-1}(a)$ can be expressed as a finite composition of arithmetic operations (addition, subtraction, multiplication, and division), the exponential function, the complex logarithm, and constants. Specific functions such as $\tan x - x$, $\exp x + x$, $x^x$ have been proven to be unsolvable by Kanel-Belov, Malistov, Zaytsev, while almost all entire surjective functions of at most exponential growth have been covered by Zelenko. All these rely on one-dimensional topological Galois theory, developed by Khovanskii. We generalize to provide a proof for the unsolvability of all elementary meromorphic functions $f$ such that the derivative of $f$ has infinitely many roots $x_i$ and the set of distinct values $f(x_i)$ is infinite.
\end{abstract}

\section{Introduction}

The question of solvability in explicit form, as a finite composition of admissible operations over a base field, is profoundly significant to the development of algebra. After the general solutions by radicals for polynomial equations of degree up to $4$ were discovered, numerous unsuccessful attempts at formulating a closed-form solution for equations of higher degree led to the belief that such solutions simply do not exist. This was proven by Ruffini (with refinements by Cauchy) in $1813$ and also by Abel in $1824$.

\smallskip

Pondering on this question, Abel laid the foundations of the theory of algebraic curves, while Galois was the first to notice that solvability by radicals is dependent on an associated group, now called the Galois group of an algebraic equation. Liouville continued Abel's work on algebraic and differential forms and made significant improvements regarding expressibility by elementary functions, that is, compositions of arithmetic operations, the exponential function $\exp$ and the logarithmic function $\ln$. He rigorously proved the unsolvability in elementary functions of some differential equations \cite{Khovanskii2014}, e.g. $f'(x) = e^{-x^2}$. Later work by Chebyshev, Ritt, Risch, Rosenlicht, and others extended this aspect of Galois theory further.

\smallskip

Arnold discovered topological reasons for the unsolvability of equations -- for instance, see his proof for the quintic \cite{GoldmakherArnoldQuintic}. His student Khovanskii \cite{khovanskii2019dimensionaltopologicalgaloistheory,Khovanskii2014} formalized the methods of the one-dimensional version of topological Galois theory around 1970. The fundamental concepts of topological Galois theory are similar to classical Galois theory, but are suitable for analyzing transcendental functions -- the former one works with extensions of the rational numbers $\mathbb{Q}$, while the latter one is with extensions of the field of rational functions $\mathbb{C}(x)$. The topological criterion for solvability is also based on a group defined through $f$, known as the monodromy group.

\smallskip

Consider the equation $f(x)=a$, where $f:X\to\mathbb{C}$ is a meromorphic elementary function defined on a connected open set $X\subseteq\mathbb{C}$, and $a\in\mathbb{C}$ is a parameter. We aim to determine when the inverse $f^{-1}(a)$ is also an elementary function. All approaches so far rely on analyzing the specific monodromy group and the structure of the roots, hence results up to now have been on individual equations, such as $\tan(x)-x=a$ \cite{Belov_Kanel_2020}, $e^x + x = a$ and $x^x=a$ \cite{kanelbelov2024insolvabilityxxa} by Kanel-Belov, Malistov, Zaytsev.

\smallskip

Other recent developments are by Zelenko \cite{zelenko2021genericmonodromygroupriemann}, proving unsolvability in quadratures, i.e. elementary functions, integration, differentiation and taking the solutions of an algebraic equation, when $f$ is holomorphic, surjective, no two critical points share the same critical value, and for some positive integer $\rho$ \[\lim_{R\to\infty}\sup_{z:|z|\geq R}\frac{\ln|f(z)|}{|z|^\rho}\] is bounded. 

\smallskip

We extend the unsolvability to all elementary meromorphic functions for which the set of critical values
    \[B=\{a\in\mathbb{C}:\ \exists x\in X:f(x)=a,f'(x)=0\}\]
is infinite. 

\begin{theorem}
    Let $X\subseteq\mathbb{C}$ be an open connected set and $f:X\to\mathbb{C}$ be an elementary meromorphic function with infinitely many critical values. Then the equation $f(x)=a$ is unsolvable in elementary functions.
\end{theorem}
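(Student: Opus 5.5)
We argue by contradiction, using Khovanskii's one-dimensional topological Galois theory. Its key consequence for us is the following. If an analytic germ is representable by elementary functions, then it is an \emph{$S$-function}: its continuation has an at most countable singular set $\Sigma$. Moreover, its monodromy group, the image of $\pi_1(\mathbb{C}\setminus\Sigma)$ acting on the germs of the multivalued function over a base point, is \emph{almost solvable}. This means it has a solvable normal series, except that it may include factors which are subgroups of permutation groups of finite sets of bounded size. In particular, almost solvable groups are closed under subgroups and quotients. Hence none of them contains, as a subquotient, the full finitary symmetric group $\fsym(\mathbb{N})$ or the finitary alternating group $\alt(\mathbb{N})$, nor any $\alt(n)$ for arbitrarily large $n$. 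So suppose $g=f^{-1}$ is elementary. It suffices to show that the monodromy group of the multivalued inverse $g$ contains permutations realizing $\alt(n)$ on $n$ branches for every $n$, or even contains $\alt$ of an infinite set of branches.

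The first step is to set up the Riemann-surface picture of $g$. Its branches are indexed by the points of a fibre $f^{-1}(a_0)\subset X$ over a generic base point $a_0$. The singular set of $g$ consists of the critical values $B$ together with the asymptotic values of $f$. Since $f$ is elementary, it is itself an $S$-function, so this set is at most countable. We can therefore choose $a_0$ and loops avoiding it. Loops around a critical value $b=f(x_i)$ act on branches locally. Near a simple critical point, where $f''(x_i)\neq 0$, a small loop around $b$, carried back to $a_0$ along a path $\gamma$, transposes the two sheets meeting at $x_i$. At a critical point of order $k$ it gives a $(k+1)$-cycle.

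Since there are infinitely many distinct critical values, we get infinitely many such local cycles living over different points of the $a$-plane.

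The second step, the combinatorial heart of the argument, is to show that these cycles generate a highly transitive, or at least "large", permutation group on the fibre. The plan has three parts.
\begin{itemize}
\item Connectedness of $X$ makes the monodromy action on the fibre transitive: lift a path in $X$ between two preimages of $a_0$ to the $a$-plane, perturbed to avoid the countable singular set.
\item Choose $n$ critical points $x_1,\dots,x_n$ with pairwise distinct values $b_1,\dots,b_n$. Using lifts of paths in $X$, conjugate the local cycles at the $x_i$ so that they act on a common connected chain of sheets.
\item A transitive group generated by transpositions is a full symmetric group. More generally, a primitive group containing a cycle of bounded length fixing many points contains $\alt$ (Jordan's theorem). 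Applying this on the orbit of the subgroup generated by these loops yields $\alt(m)$ with $m\ge n$ as a subquotient.
\end{itemize}
Since $n$ is arbitrary, the monodromy group is not almost solvable, contradicting Khovanskii's theorem.

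I expect the main obstacle to be the second step. First, the critical points may have higher multiplicity, or one critical value may be attained at several critical points. Second, the transpositions and cycles are only defined up to the choice of connecting path, so one must verify primitivity, or at least that the generated group is not imprimitive with bounded blocks, on arbitrarily large orbits. I would first handle the case of simple critical points with distinct values using the transposition argument. For the general case I would reduce to it by choosing, for each of the infinitely many distinct critical values, one critical point and a small loop. I would then use Jordan's theorem, a primitive group containing a $p$-cycle for prime $p\le m-3$ contains $\alt(m)$, after showing that block systems are incompatible with cycles supported near a single fibre point. A fallback is to extract, via commutators of loops around two different critical values whose supports overlap in one sheet, $3$-cycles. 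This suffices because $3$-cycles on a connected "overlap graph" generate an alternating group.
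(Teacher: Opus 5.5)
There is a genuine gap, and it lies in the step you flagged yourself, which carries the whole theorem. Take a small loop around a critical value $b_i$, carried back to $a_0$. Its monodromy element is \emph{not} the local $(k+1)$-cycle at the one critical point $x_i$ you picked. It is the product of the local cycles at \emph{all} points of the Riemann surface of $g=f^{-1}$ lying over $b_i$. If $b_i$ is also an asymptotic value, it additionally contains an infinite cycle for each logarithmic singularity over $b_i$.

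There can be infinitely many such points; already for $\cos x$ over $1$ the loop is an infinite product of transpositions. When $X\neq\mathbb{C}$ these points need not even lie in $X$, since continuations of $g$ leave $X$. So your identification of the branches of $g$ with $f^{-1}(a_0)\subset X$ is also wrong in that case.

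Consequently you have neither transpositions nor any non-identity finitary element. Each of your three tools presupposes exactly that: ``a transitive group generated by transpositions is symmetric'', Jordan's theorem on a primitive orbit, and $3$-cycles from commutators of loops whose supports meet in one sheet. Choosing one critical point per critical value does not localize the loop, and primitivity of the orbit actions is not addressed either.

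This cannot be fixed by a finer combinatorial argument, because for arbitrary $X$ the statement is false. Let
\[X=\mathbb{C}\setminus\bigcup_{k\in\mathbb{Z}}\{\tfrac{\pi}{2}+k\pi-it:\ t\ge 0\},\]
which is simply connected with $\cos\neq 0$ on $X$. Let $f=\ln\cos x$ be the branch with $f(0)=0$.
\begin{itemize}
\item $f'=-\tan x$ vanishes at every $k\pi\in X$.
\item Continue from $0$ up the imaginary axis, then horizontally through the upper half-plane, where $\cos(t+is)=\cosh s\cos t-i\sinh s\sin t$ turns clockwise, then down the line $\operatorname{Re}x=k\pi$. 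This gives $f(k\pi)=-ik\pi$.
\item So $f$ is an elementary holomorphic function on $X$ with infinitely many critical values.
\item Yet $x=\arccos(e^{a})=-i\ln\bigl(e^{a}+i\sqrt{1-e^{2a}}\bigr)$ solves $f(x)=a$.
\end{itemize}
Take the base point $a_0$ near $0$, with full fibre $\{\pm y+2\pi m:\ m\in\mathbb{Z}\}$ where $\cos y=e^{a_0}$. The loop around $b=0$ acts as $2\pi m+y\leftrightarrow 2\pi m-y$ for all $m$ simultaneously, and the whole monodromy group is infinite dihedral.

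Any correct proof therefore needs an extra hypothesis that produces a non-identity finitary monodromy element. One example would be $X=\mathbb{C}$ together with a critical value that has finitely many critical preimages and is not an asymptotic value. It then needs an argument that this element survives in a primitive section.

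The paper takes a different route. It factors $f$ into maps with primitive monodromy via Burda's correspondence between blocks and decompositions, and applies Wielandt's theorem to a factor with infinite branch locus. However, it obtains the required finite-support element only from ``ramification indices are finite''. That is the same unjustified step, and the example above applies to it as well.
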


The provided proof combines a result of Wielandt on primitive group actions on infinite sets (Theorem \ref{thm:wielandt}), and a relation between primitivity and decompositions (Theorem \ref{prop:imprimitive-equiv-decomposition}) stated, for example, in the doctoral work of Burda on algebraic functions.

\medskip

\section{Auxilliary results}

We give a short review of the required theory, as well as an emphasis on why each condition in the hypothesis of the main theorem matters for our purposes. We follow Khovanskii \cite{Khovanskii2014} for the formulation and development of the aspects of topological Galois theory.

\subsection{Results from Group Theory and Complex Analysis}

For a set $S$ we denote by $\sym(S)$ the symmetric group $S$, by $\alt(S)$ the alternating group on $S$ and by $\fsym(S)$ the subgroup of $\sym(S)$ generated by all cycles of finite length. In all applications here $S$ would be infinite, in particular $\alt(S)$ is not a solvable group.

\begin{defn}
\label{defn:block}
    Let $S$ be a set and let $G$ be a group acting transitively on $S$. A nonempty subset $T\subseteq S$ is called a \emph{block} for $G$ if for each $x\in G$ either $x \cdot T= T$ or $(x \cdot T) \cap T =\emptyset$.
\end{defn}

\begin{remark}
    Every group $G$ acting transitively on $S$ has $S$ and the singletons $\left\{a\ \vert\ a\in S\right\}$ as blocks, which are called \textbf{trivial}. Any other block is \textbf{nontrivial}.
\end{remark}

\begin{defn}
\label{defn:primitive-group}
    Let $S$ be a set and let $G$ be a group acting transitively on $S$. Then $G$ is called \emph{primitive} if it has no nontrivial blocks on $S$.
\end{defn}

\begin{theorem}\emph{(Wielandt, \cite[Theorem 29.2]{neumann2023infinitepermutationgroups})}
\label{thm:wielandt}
    Let $S$ be an infinite set and $G$ be a primitive group acting on $S$. If $G$ contains a non-identity permutation with finite support, then $\alt(S)\leq G$.
\end{theorem}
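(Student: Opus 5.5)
The plan is to reduce Theorem~\ref{thm:wielandt} to a finite statement of Jordan type by exhausting $S$ with finite sets. Let $F=G\cap\fsym(S)$ be the set of elements of $G$ with finite support. It is a normal subgroup of $G$, and by hypothesis it is nontrivial. A nontrivial normal subgroup of a primitive group is transitive, since its orbits form a $G$-invariant partition. So $F$ is transitive on $S$.

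Next I will form the hypergraph on $S$ whose edges are the supports of the elements of $F$. Its connected components form a $G$-invariant partition of $S$, because conjugating by $G$ permutes these supports. Each component has at least two points, since every support of a non-identity permutation has at least two points. Primitivity therefore forces a single component. Consequently, for any finite $\Sigma_0\subseteq S$ I can find a finite set $\Sigma\supseteq\Sigma_0$ that is a connected union of supports of elements of $F$. For such $\Sigma$, let $F_\Sigma$ be the subgroup of $F$ generated by elements supported inside $\Sigma$. Then $F_\Sigma$ is transitive on $\Sigma$: the generators move points along the overlapping supports, and these supports connect all of $\Sigma$.

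The main step, and the one I expect to be the real obstacle, is to arrange that $F_\Sigma$ is primitive on $\Sigma$ for a cofinal family of such $\Sigma$. I would argue by contradiction. Suppose that for every large $\Sigma$ in an increasing exhausting chain $\Sigma_1\subseteq\Sigma_2\subseteq\cdots$ (or a directed family, if $S$ is uncountable) there is a nontrivial block system for $F_{\Sigma}$. Since the blocks live on finite sets, a compactness (König-type) argument lets me choose these block systems compatibly. Their union is then an $F$-invariant equivalence relation on $S$ that is neither trivial nor universal. To upgrade it to a $G$-invariant one, I would use that $F$ is normal and transitive: take the finest nontrivial $F$-congruence containing a fixed pair, and note that $G$ permutes $F$-congruences. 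Showing that the resulting relation is a nontrivial block system for $G$, contradicting primitivity, is where the care is needed. The fallback is the classical route: take $g\in F$ of minimal support $\Delta$, use connectivity to find $h\in G$ with $h\Delta\cap\Delta\neq\emptyset$ and $h\Delta\neq\Delta$, and shrink the support via commutators $[g,hgh^{-1}]$ until a $3$-cycle is produced.

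Once $F_\Sigma$ is primitive on $\Sigma$ with $|\Sigma|$ large, it contains a fixed nontrivial element of support size $m$, independent of $\Sigma$. A finite Jordan-type theorem (for instance Marggraf's, or the bound that a primitive group of degree $n$ containing an element of support $m$ with $n$ large in terms of $m$ contains the alternating group) then gives $\alt(\Sigma)\leq F_\Sigma\leq G$. Taking the union over the exhausting family of such $\Sigma$ yields every finitary even permutation of $S$, that is, $\alt(S)\leq G$.
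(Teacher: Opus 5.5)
The paper does not prove this statement; it quotes it from Neumann's book, so your argument has to stand on its own. Your first step ($F=G\cap\fsym(S)$ is nontrivial and normal, hence transitive) is sound. So is your last step: the finite Jordan-type bound applied to a fixed element of support size $m$ inside a primitive $F_\Sigma$, followed by a union over a cofinal family. What lies between them does not hold up.

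\textbf{First gap: transitivity of $F_\Sigma$.} The claim that $F_\Sigma$ is transitive on a connected union $\Sigma$ of supports does not follow from your reasoning. A support is a union of several cycles, and the elements of $F$ that link those cycles need not be supported in $\Sigma$. Your argument uses only that the support hypergraph is connected. That also holds for the following transitive group $T$, where the conclusion fails. Let $T$ consist of the finitary permutations of the finitely supported $0/1$-sequences that preserve, for every $k$, the partition into classes of sequences agreeing after the $k$-th coordinate. Write $s_1s_2s_3$ for $s_1s_2s_30\cdots$. Then $T$ contains $y=(000\ 100)(001\ 101)$. But any $z\in T$ with $z(000)=001$ maps $\{000,100,010,110\}$ onto $\{001,101,011,111\}$, and hence moves $010\notin\operatorname{supp}(y)$. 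So the elements of $T$ supported in $\operatorname{supp}(y)$ act intransitively there. Primitivity of $G$ has to be used at this point, e.g.\ via Jordan sets: finite $\Sigma$ with $G_{(S\setminus\Sigma)}$ transitive on $\Sigma$, a class closed under overlapping unions. Even producing a first finite Jordan set needs an argument.

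\textbf{Decisive gap: primitivity of $F_\Sigma$.} You do not establish it.
\begin{itemize}
\item The K\"onig-type limit of nontrivial block systems of the $F_{\Sigma_n}$ can be the universal relation, since nothing prevents the chosen system at level $n$ from having a block containing $\Sigma_{n-1}$. So ``neither trivial nor universal'' is unjustified.
\item Even granting a nontrivial $F$-congruence, $G$ merely permutes $F$-congruences. The finest one containing a fixed pair depends on the pair and need not be $G$-invariant. No purely formal argument can work here, because transitive normal subgroups of primitive groups can be imprimitive: take the translations in $\mathrm{AGL}(2,\mathbb{Q})$ acting on $\mathbb{Q}^2$. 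The finiteness of supports must be exploited exactly at this step, and your proposal never does so.
\item The fallback does not close the gap either. Since $g$ has minimal support, a nontrivial $[g,hgh^{-1}]$ can never have smaller support, so there is nothing to iterate. The commutator is a $3$-cycle when $|\Delta\cap h\Delta|=1$, but producing such an $h$ is where the real work lies. One standard route first shows, via Jordan sets, that $G$ is highly transitive.
\end{itemize}
As written, the proposal reduces Wielandt's theorem to a claim of essentially the same difficulty.
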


\begin{remark}
    According to \cite[Theorem 3.3A]{dixon1996permutation}, if $G$ contains a transposition, then even $\fsym(S) \leq G$ holds. There are also versions of Wielandt's theorem when $S$ is finite (e.g., Jordan's Theorem \cite[Theorem 3.3E]{dixon1996permutation}).
\end{remark}

\begin{defn} A function $f:U \to \mathbb{C}$ is \emph{meromorphic} if it satisfies the following conditions:
    \begin{itemize}
        \item It is holomorphic except on a set of isolated points, i.e. such that for any two of these points, there is an open ball containing one, but not the other.
        \item For any of these isolated points $z_0 \in U$, there exists a neighbourhood $V\subseteq U$ of $z_0$ and an integer $n$ such that $(z-z_0)^nf(z)$ is holomorphic and non-zero on $V$.
    \end{itemize}
\end{defn}

\begin{theorem}
\label{theorem:identity-theorem}
    \emph{(Identity theorem)} Let $D\subseteq\mathbb{C}$ be an open set and let $f:D\to\mathbb{C}$ be meromorphic and not identically zero. Then the set of zeroes $\{z\in D:f(z)=0\}$ is a discrete subset of $D$, that is, no accumulation point of zeroes is in $D$.
\end{theorem}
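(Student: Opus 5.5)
The statement immediately above is the Identity theorem (Theorem \ref{theorem:identity-theorem}), and I will prove it by reducing to the holomorphic case and using power series locally together with connectedness. The idea is that the zero set of $f$ can only accumulate at a point where $f$ is holomorphic, and then a connectedness argument forces $f\equiv 0$ on the component containing that point. As stated, the theorem needs $f$ to be not identically zero on each connected component of $D$; equivalently, I will take $D$ connected, which is how it is applied later since $X$ is connected. The contrapositive I will establish is: if the zeros of $f$ accumulate at some $z_0\in D$, then $f$ vanishes identically on the component of $D$ containing $z_0$.

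\textbf{Step 1.} Let $P\subseteq D$ be the discrete set of poles and singular points. Suppose zeros $z_k\neq z_0$ of $f$ converge to $z_0\in D$. If $z_0\in P$, then near $z_0$ we can write $f(z)=(z-z_0)^{-n}g(z)$ with $g$ holomorphic and nonzero on a neighbourhood $V$. Then $f$ has no zeros in $V\setminus\{z_0\}$, which is a contradiction. Hence $z_0\notin P$, and $f$ is holomorphic on a disc around $z_0$.

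\textbf{Step 2.} Expand $f(z)=\sum_{m\ge 0} c_m (z-z_0)^m$ on that disc. If some coefficient is nonzero, let $c_M$ be the first one. Then $f(z)=(z-z_0)^M h(z)$ with $h(z_0)=c_M\neq 0$, so by continuity $h$ does not vanish near $z_0$. This contradicts the existence of the zeros $z_k\to z_0$. Therefore all $c_m=0$, and $f\equiv 0$ on a disc around $z_0$.

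\textbf{Step 3.} Let $\Omega=D\setminus P$. Removing a discrete closed set from a connected open subset of the plane leaves it connected, so $\Omega$ is connected. Define $A$ to be the set of points of $\Omega$ near which $f$ vanishes identically. $A$ is open by definition, and it is nonempty by Step 2. To see that $A$ is closed in $\Omega$, take $w\in\Omega$ that is a limit of points of $A$. Then $w$ is an accumulation point of zeros, so Step 2 applied at $w$ puts $w$ in $A$. By connectedness $A=\Omega$.

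Finally, if $P$ were nonempty, each $p\in P$ would have the form $(z-p)^{-n}g(z)$ with $g$ nonzero on a punctured neighbourhood. This is impossible when $f\equiv 0$ there, so $P=\emptyset$ and $f\equiv0$ on $D$, contradicting the hypothesis. The main obstacle is Step 3, specifically the connectedness of $\Omega$. I will handle it by noting that any path in $D$ can be perturbed within small discs to avoid the isolated points of $P$. This works because a disc minus finitely many points is still path-connected, and a compact path meets only finitely many points of the discrete set $P$.
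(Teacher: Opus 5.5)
Your proof is correct. The paper gives no proof of this statement at all; it simply quotes it as the standard identity theorem. Your write-up is the usual textbook argument: the local form at poles rules out accumulation there, the power series at a regular point gives vanishing on a disc, and an open--closed argument spreads this. It therefore supplies exactly what the paper leaves to the literature.

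Your remark about the hypothesis is right, and it is a genuine correction rather than a cosmetic one. Take $D=B(0,1)\cup B(3,1)$ with $f=0$ on the first disc and $f=1$ on the second. Then $f$ is holomorphic and not identically zero, but its zero set is not discrete. So one needs $D$ connected, as $X$ is in the applications, or $f\not\equiv 0$ on every component.

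One small simplification is available. The step you flag as the main obstacle, connectedness of $D\setminus P$, can be bypassed by running the open--closed argument on $D$ itself with
$A=\{z\in D:\ f\equiv 0 \text{ on some punctured disc around } z\}$.
\begin{itemize}
\item $A$ is open directly from the definition.
\item $A$ is closed in $D$: any limit point of $A$ in $D$ is an accumulation point of zeros. By your Step 1 it is therefore not a pole, and by your Step 2 $f$ vanishes on a whole disc around it.
\item By Step 1 again, no point of $P$ can lie in $A$.
\end{itemize}
So connectedness of $D$ gives $A=D$, which forces $P=\emptyset$ and $f\equiv 0$, without any path rerouting.

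Your rerouting argument is also sound as sketched. A compact path in $D$ meets the closed discrete set $P$ in only finitely many points, and a disc minus finitely many points is path-connected. The alternative just spares you writing those details out.
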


The next lemma confirms that any path can be perturbed in order to not contain a forbidden point, when the forbidden set is finite or countably infinite.

\begin{lemma}
\label{lemma:freeing-path-from-countable-set}
    Let $A$ be an at most countable subset of $\mathbb{C}$, let $\gamma:[0,1]\to\mathbb{C}$ be a continuous path, and $\varphi: [0,1] \to \mathbb{R}_{>0}$ be a continuous function. Then there exists a path $\hat{\gamma}:[0,1]\to\mathbb{C}$ such that we have $\hat{\gamma}(t)\notin A$ and $|\gamma(t)-\hat{\gamma}(t)|<\varphi(t)$.
\end{lemma}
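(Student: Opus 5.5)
The plan is to perturb $\gamma$ piecewise linearly, using the fact that the set of directions in which a segment meets $A$ is countable. Since $[0,1]$ is compact and $\varphi$ is continuous and positive, we have $\varphi \geq \delta>0$ for some $\delta$. It therefore suffices to find $\hat\gamma$ avoiding $A$ with $|\gamma(t)-\hat\gamma(t)|<\delta$ for all $t$.

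First, by uniform continuity choose a partition $0=t_0<t_1<\dots<t_n=1$ such that $|\gamma(t)-\gamma(t_k)|<\delta/4$ for $t\in[t_k,t_{k+1}]$. Next, choose nodes $p_k\notin A$ with $|p_k-\gamma(t_k)|<\delta/4$. This is possible because the disc of radius $\delta/4$ around $\gamma(t_k)$ is uncountable while $A$ is countable.

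The segments need more care. For consecutive nodes $p_k,p_{k+1}$, I would not use the straight segment, which may hit $A$. Instead I would use a two-segment broken line $p_k\to q\to p_{k+1}$, where $q$ is chosen in a small disc around the midpoint, or around $p_{k+1}$. A point $a\in A$ lies on the segment $[p_k,q]$ only if $q$ lies on the ray from $p_k$ through $a$; likewise $a$ lies on $[q,p_{k+1}]$ only if $q$ lies on the ray from $p_{k+1}$ through $a$. So the bad choices of $q$ form a countable union of lines. A countable union of lines is a null set (or meagre, by Baire), so it cannot cover a disc. Hence $q$ can be chosen in the disc of radius $\delta/4$ around $\gamma(t_{k+1})$ such that both segments avoid $A$.

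Finally, parametrize $\hat\gamma$ on $[t_k,t_{k+1}]$ as the broken line $p_k\to q_k\to p_{k+1}$. All of $p_k$, $q_k$, $p_{k+1}$ lie within $\delta/2$ of $\gamma(t_k)$: we have $|\gamma(t_{k+1})-\gamma(t_k)|<\delta/4$, and the perturbation adds at most another $\delta/4$. The disc of radius $\delta/2$ around $\gamma(t_k)$ is convex, so every point of the broken line lies in it. For $t\in[t_k,t_{k+1}]$ this gives
\[
|\hat\gamma(t)-\gamma(t)|\le|\hat\gamma(t)-\gamma(t_k)|+|\gamma(t_k)-\gamma(t)|<\delta/2+\delta/4<\delta\le\varphi(t).
\]
The only genuinely non-routine step is showing that $q$ can avoid the countably many bad lines, which is the measure-zero (or Baire) argument above. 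If one prefers not to allow $\hat\gamma(0)\neq\gamma(0)$, note that the statement permits this, so no endpoint condition needs to be maintained.
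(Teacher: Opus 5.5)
Your proof is correct. The paper gives no argument of its own for this lemma and simply cites Khovanskii's Lemma~5.20, so yours is a self-contained elementary replacement rather than a variant of a proof in the text.

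The structure is sound:
\begin{itemize}
\item compactness reduces $\varphi$ to a constant $\delta>0$;
\item uniform continuity gives a polygonal skeleton with vertices $p_k\notin A$;
\item each link $p_k\to q_k\to p_{k+1}$ is made to miss $A$ by choosing $q_k$ off the countably many lines joining $p_k$ or $p_{k+1}$ to points of $A$, and such a null set cannot fill a disc.
\end{itemize}
These lines are well defined because $p_k,p_{k+1}\notin A$. You could also say explicitly that $q_k\notin A$ comes for free: if $q_k=a\in A$, then $q_k$ lies on the line through $p_k$ and $a$.

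One addition is worth making, because of how the lemma is used in Proposition~\ref{prop:monodromy-properties}. There $\gamma$ is a loop at a point $a\notin A$, and $\hat\gamma$ must again be a loop at $a$ in order to define an element of $\pi_1(\mathbb{C}\setminus A,a)$. The statement as printed does not require fixed endpoints, so moving them is legitimate for the lemma as stated. Your construction nevertheless gives the stronger version at no cost:
\begin{itemize}
\item if $\gamma(0)\notin A$ and $\gamma(1)\notin A$, take $p_0=\gamma(0)$ and $p_n=\gamma(1)$;
\item if an endpoint lies in $A$, run the same line-avoidance argument on the half-open segments $(p_0,q_0]$ and $[q_{n-1},p_n)$.
\end{itemize}
In the second case you get a path with the original endpoints that meets $A$ at most at $t=0$ and $t=1$. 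In both cases the required closeness $|\gamma(t)-\hat\gamma(t)|<\varphi(t)$ is kept.
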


\begin{proof}
    See \cite[Lemma 5.20]{Khovanskii2014}.
\end{proof}

\begin{defn}
\label{defn:critical-point}
    For a meromorphic function $f:U\to\mathbb{C}$, where $U\subseteq\mathbb{C}$, a point $x_0$ is \emph{critical} if $f'(x_0)=0$. In this setting $f(x_0)$ is said to be a \textit{critical value}.
\end{defn}

We emphasize that in the case of a meromorphic function, all critical points have finite order, so the scope of our work does not include functions such as $\exp(1/z)$. In fact, $\exp(1/z) = a$ is solvable via $z = (\ln a)^{-1}$, but this does not contradict the developed theory.

\subsection{Elementary functions}

The elementary functions are compositions of:
\begin{itemize}
    \item arithmetic operations (addition, subtraction, multiplication, and division);
    \item exponential and logarithm functions.
\end{itemize}
In this section we give precise definitions in order to establish their relevant properties in a rigorous manner.

\begin{defn}
\label{defn:differential-field}
    A \emph{differential field} $(K,D)$ is a field $K$ of characteristic zero, equipped with a map $D:K\to K$, satisfying for all $u,v\in K$:
    \begin{enumerate}
        \item $D(u+v) = D(u) + D(v)$;
        \item $D(uv) = uD(v)+vD(u)$.
    \end{enumerate}
\end{defn}

\begin{defn}
\label{defn:exponential-extension}
    Let $(K, D)$ be a differential field. A differential field extension $K\subset L$ is called an \emph{exponential extension} if there exists $y\in L$ satisfying the linear differential equation
    \[D(y)=gy\]
    for some $g\in K$.
\end{defn}

\begin{example}
    {\normalfont We define $\displaystyle \exp(x) = \sum_{i=0}^{\infty} \frac{x^i}{i!}$. Let $K=\mathbb{C}(x)$, the field of rational functions over $\mathbb{C}$. Then the extension $K\subset K(\exp(x))$ is exponential with $g\equiv 1$.}
\end{example}

\begin{defn}
\label{defn:logarithmic-extension}
    Let $(K, D)$ be a differential field. A differential field extension $K\subset L$ is called a \emph{logarithmic extension} if there exists $y\in L$ such that
    \[D(y)\in K,\text{ but }y\notin K\text{.}\]
\end{defn}

\begin{example}
    {\normalfont The natural logarithm is $\ln(x):=\exp^{-1}(x)$, i.e. defined via $y$ such that $\exp(y) = x$, whenever $y$ exists and is unique. Let $K=\mathbb{R}(x)$. Then $K\subset K(\ln(x))$ is a logarithmic extension.}
\end{example}

\begin{defn}
\label{defn:elementary-tower}
    An \emph{elementary tower} over the base differential field $K_0$ is a finite chain
    \[K_0\subset K_1\subset\cdots\subset K_m\text{,}\]
    where each extension $K_{i+1}\supset K_i$ is exponential or logarithmic.
\end{defn}

\begin{defn}
\label{defn:elementary-function}
    A function $f$ is said to be \emph{elementary} if there exists an elementary tower
    \[K_0\subset K_1\subset\cdots\subset K_m\]
    such that $f(x)\in K_m$.
\end{defn}

\begin{example}
    {\normalfont The function $f:\mathbb{C} \to \mathbb{C}$ given by $\displaystyle f(x)=\cos x + \exp(x)-\frac{x}{2}$ is an elementary function due to $\displaystyle \cos x = \frac{\exp(ix) + \exp(-ix)}{2}$ and the tower $K_0 = \mathbb{C}(x)$, $K_1 = K_0(\exp(x))$, $K_2 = K_1(\exp(ix))$.  We emphasize that the $n^\text{th}$ radical $\sqrt[n]{x}$ is not an arithmetic operation but is obtainable by $\exp$ and $\ln$ as \[\sqrt[n]{x}= x^{1/n} = \exp\left(\frac{1}{n}\ln(x)\right)\text{.}\]}
\end{example}

\section{\Large One-dimensional topological Galois theory}
\begin{defn}
\label{defn:solvable-elementary}
    Fix a field $K$, a subset $A \subseteq K$ and a function $f: A \to K$. An equation $f(x)=a$ is \emph{solvable in elementary functions} if there exists an elementary function $g$ such that $f(g(a)) = a$ for all $a$ in the image of $f$.
\end{defn}

\begin{defn}
\label{defn:solvable-group}
    A group $G$ is a \emph{solvable group of depth $\ell\geq 0$} if there exists a nested chain of subgroups $G=G_0\supset\cdots\supset G_\ell=e$, such that for each $i=1,\ldots,\ell$ we have $G_i\triangleleft G_{i-1}$ and the quotient group $G_{i-1}/G_i$ is abelian.
\end{defn}

\begin{defn}
\label{defn:germ}
    Let $X$ be a topological space. Let $f$ and $g$ be functions defined on some neighbourhoods of a point $x_0\in X$. We say that $f$ and $g$ have the same \emph{germ} if there exists a neighbourhood $U$ of $x_0$ such that $f$ and $g$ are defined on $U$ and
    \[f|_U=g|_U\text{.}\]
    The \emph{germ} of $f$ with respect to $x_0$, denoted as $f_{x_0}$, is the set of all functions $g$ satisfying the above relation for some $U$.
\end{defn}

\begin{defn}
\label{defn:forbidden-set}
    An at most countable set $A$ is a \emph{forbidden set} for a germ $f_a$ if $f_a$ admits a regular continuation along every path $\gamma(t)$ with $\gamma(0)=a$ that never intersects the set $A$ except possibly at $a$.
\end{defn}

\begin{defn} \label{defn:singular-point-germ}
    Fix a point $a\in \mathbb{CP}^1$ from the complex projective line. A point $b\in \mathbb{CP}^1$ is \emph{singular} for the germ $f_a$ if there exists a path $\gamma:[0,1]\to \mathbb{CP}^1$, $\gamma(0)=a$, $\gamma(1)=b$ such that the germ has no regular continuation along this path but for every $t$, $0\leq t<1$, it admits a regular continuation along the truncated path $\gamma:[0,t]\to \mathbb{CP}^1$.
\end{defn}

\begin{defn}
\label{defn:equivalent-germs}
    Two germs $f_a$ and $g_b$ defined at points $a$ and $b$ of the complex projective line $\mathbb{CP}^1$ are \emph{equivalent} if the germ $g_b$ is obtained from the germ $f_a$ by analytic continuation along some path. 
\end{defn}

\begin{defn}
\label{defn:s-germ}
    A regular germ is called an $\mathscr{S}$-germ if the set of its singular points is at most countable.
\end{defn}

\begin{defn}
\label{defn:s-function}
    A multivalued analytic function $f:A\to B$, $A,B\subseteq\mathbb{C}$, is a \emph{$\mathscr{S}$-function} if each of its regular germs is an $\mathscr{S}$-germ.
\end{defn}

\begin{theorem}
\label{thm:stability-s-function}
    The class $\mathscr{S}$ of all $\mathscr{S}$-functions is closed under the following operations:
    \begin{enumerate}[label=$($\arabic*$)$]
        \item Differentiation: if $f\in\mathscr{S}$, then $f'\in\mathscr{S}$.
        \item Integration: if $f\in\mathscr{S}$ and $g'=f$, then $g\in\mathscr{S}$.
        \item Composition: if $g,f\in\mathscr{S}$, then $g\circ f\in\mathscr{S}$.
        \item Meromorphic operations: if $f_i\in\mathscr{S}$, $i=1,\ldots,n$, the function $F(x_1,\ldots,x_n)$ is a meromorphic function of $n$ variables, and $f=F(f_1,\ldots,f_n)$, then $f\in\mathscr{S}$.
        \item Solving algebraic equations: if $f_i\in\mathscr{S}$, $i=1,\ldots,n$, and $f^n+f_1f^{n-1}+\ldots+f_n=0$, then $f\in\mathscr{S}$.
        \item Solving linear differential equations: if $f_i\in\mathscr{S}$, $i=1,\ldots,n$, and $f^{(n)}+f_1f^{(n-1)}+\ldots+f_n=0$, then $f\in\mathscr{S}$.
    \end{enumerate}
\end{theorem}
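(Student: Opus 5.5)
The statement is Khovanskii's stability theorem, and I would follow his route: first replace the condition ``countably many singular points'' by the more flexible condition ``admits a countable forbidden set'' (Definition~\ref{defn:forbidden-set}), and then check each operation against forbidden sets.

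\textbf{Step 1 (reduction).} I would prove that a regular germ $f_a$ is an $\mathscr{S}$-germ if and only if it has an at most countable forbidden set.

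For ``$\Leftarrow$'', let $b\notin A$ and let $\gamma$ be a path from $a$ to $b$ along which $f_a$ continues for $t<1$. Near $t=1$ I would replace $\gamma$ by a nearby path avoiding $A$, which Lemma~\ref{lemma:freeing-path-from-countable-set} provides. Choosing $\varphi$ smaller than the radii of convergence of the continued germs along $\gamma|_{[0,t_0]}$ keeps the two continuations equal up to a time $t_0$ close to $1$. Along the perturbed tail the germ continues, and it ends in a disc around $b$ free of singularities. This contradicts singularity of $b$, so every singular point lies in $A$.

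For ``$\Rightarrow$'', I would take $A$ to be the union of the singular point sets of all germs equivalent to $f_a$. These germs at a fixed point form a countable set, because every admissible path can be replaced by a polygonal path with rational vertices without changing the resulting germ. Hence $A$ is a countable union of countable sets.

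This countability of the germs of an $\mathscr{S}$-function over each point, equivalently second countability of its Riemann surface, is used repeatedly below.

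\textbf{Step 2 (operations (1), (2), (6)).} If $A$ is forbidden for $f$, the same $A$ is forbidden for $f'$ and for any primitive $g$. The integral $\int_\gamma f\,dz$ is defined along every path on which $f$ continues, so $g$ continues along it as well. For (6), take $A=\bigcup_i A_i$ where $A_i$ is forbidden for $f_i$. Along any path avoiding $A$ the coefficients continue holomorphically, and by local existence and uniqueness for linear ODEs every solution germ continues along the same path.

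\textbf{Step 3 (operations (3), (4), (5)).} Here one must add the points where the construction degenerates.

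For (4), the germ $F(f_1,\dots,f_n)$ continues wherever the $f_i$ do, except where $(f_1,\dots,f_n)$ meets the polar or indeterminacy set of $F$. If the tuple does not lie identically in that analytic set, the bad points are discrete on each of the countably many sheets. Their projection is therefore countable, and I add it to $A$.

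For (3), take $A_f\cup f^{-1}(A_g)$ over all branches of $f$. A nonconstant analytic germ has countable preimage of a countable set on each sheet, and the constant case is trivial.

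For (5), add to $\bigcup A_i$ the countably many zeros of the discriminant on each sheet. Away from these points the roots continue by the implicit function theorem. The finitely many coefficient poles are handled by including them in the forbidden set.

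\textbf{Main obstacle.} I expect the hardest part to be the ``$\Leftarrow$'' direction of Step~1: ensuring that the perturbed path really yields the same continued germ, so that the failure of continuation along $\gamma$ is genuinely contradicted. I would handle this by choosing $\varphi(t)$ from the continuous positive radius-of-convergence function along the already continued part of $\gamma$. The second delicate point is the countability of germs over a point, which I would obtain via the rational-polygon approximation.
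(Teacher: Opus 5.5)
The paper gives no proof of this theorem. It is quoted from Khovanskii, and the two facts the paper records around it (Lemma~\ref{lemma:freeing-path-from-countable-set} and Theorem~\ref{thm:forbidden-sets-s-germ}) are exactly the ingredients of his argument. Your plan is that argument: trade ``countably many singular points'' for ``a countable forbidden set'', then check each operation, enlarging the forbidden set by countably many degeneracy points on the countably many sheets. Steps 2 and 3 are fine in outline.

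\textbf{The gap: ``$\Leftarrow$'' in Step 1.} As written, this step does not close. You control $\hat\gamma$ only on $[0,t_0]$ and leave the tail free. The germ reached at $b$ along $\hat\gamma$ has some radius $\rho>0$. To conclude that the germs $g_t$ along $\gamma$, $t\in(t_0,1)$, are re-expansions of it, you would need $\gamma([t_0,1])\cup\hat\gamma([t_0,1])\subset D(b,\rho)$. But $\rho$ is known only after $t_0$ and $\hat\gamma$ are fixed, so the choice is circular. There is also no disc around $b$ free of $A$: any countable superset of a forbidden set is forbidden, so $A$ may be dense. Finally, you cannot simply feed the radius of convergence $R(t)$ of $g_t$ into Lemma~\ref{lemma:freeing-path-from-countable-set} as $\varphi$. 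When $b$ is singular, $R(t)\to 0$ as $t\to1$, while the lemma needs $\varphi>0$ on the closed interval and does not fix endpoints.

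\textbf{The repair.} Control the perturbation on all of $[0,1)$.
\begin{itemize}
\item Take $\varphi(t)=\min(R(t)/3,\,1-t)$ and apply the lemma on the intervals $[1-1/k,\,1-1/(k+1)]$.
\item Glue consecutive pieces, and the start at $a$, by short broken lines avoiding $A$; these exist because $A$ is countable.
\item This gives $\hat\gamma$ with $\hat\gamma(0)=a$, $\hat\gamma(t)\notin A$ for $t>0$, $|\hat\gamma(t)-\gamma(t)|<\varphi(t)$, and $\hat\gamma(1):=b\notin A$.
\item Since $A$ is forbidden, $f_a$ continues along $\hat\gamma$. By uniqueness of continuation, the germ at $\hat\gamma(t)$ is the re-expansion of $g_t$.
\item Let $\rho$ be the radius of the germ $h$ reached at $b$. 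For $t$ near $1$, both $\gamma([t,1])$ and $\hat\gamma([t,1])$ lie in $D(b,\rho)$.
\item The identity theorem on the convex set $D(\gamma(t),R(t))\cap D(b,\rho)$ then shows that $g_t$ is a germ of the sum of $h$.
\item Hence $f_a$ continues along $\gamma$ to $b$, contradicting the singularity of $b$.
\end{itemize}

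\textbf{Smaller points.}
\begin{itemize}
\item For ``$\Rightarrow$'' you need neither a union over equivalent germs nor countability of germs: $\mathrm{Sing}(f_a)$ is itself forbidden. If continuation along a path avoiding it failed, the first failure time would exhibit a singular point on the path. Equivalent germs have the same singular set anyway, so your union is just $\mathrm{Sing}(f_a)$.
\item In (5) the discriminant can vanish identically when the polynomial $P$ has a repeated factor. Replace $P$ by its square-free part, computed by the Euclidean algorithm for $\gcd(P,P')$. Its coefficients are rational in the $f_i$ and so lie in $\mathscr{S}$ by (4).
\item The coefficient poles are countably many, not finitely many; this is harmless.
\item In (6), local existence and uniqueness alone is not enough. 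You need the linear-specific fact that solutions extend over every disc on which the coefficients are holomorphic. This is what excludes movable singularities and lets a solution follow the whole path.
\end{itemize}
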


\begin{remark}
    Arithmetic operations and exponentiation are meromorphic operations, hence the class of $\mathscr{S}$-functions is stable under the arithmetic operations and exponentiation.
\end{remark}

\begin{remark}
\label{rem:another-germ}
    Let $f_a$ be the set of all germs of an $\mathscr{S}$-function $f$ at a point $a$ not lying in some forbidden set $A$ $($see Definition~\ref{defn:forbidden-set}$)$. Take a closed path $\gamma$ in $\mathbb{CP}^1\setminus A$ that originates at $a$. Then the continuation of every germ in $f_a$ along $\gamma$ is another germ in $f_a$.
\end{remark}

\begin{theorem}
\label{thm:forbidden-sets-s-germ}
    An at most countable set is a forbidden set of a germ if and only if it contains the set of its singular points. In particular, a germ has a forbidden set if and only if it is a germ of an $\mathscr{S}$-function.
\end{theorem}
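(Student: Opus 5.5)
The plan is to prove the two implications of the first sentence separately, and then read off the ``in particular'' statement from them together with Definition~\ref{defn:s-germ}. Throughout, a germ $f_a$ is fixed and $A$ denotes an at most countable set.

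\textbf{Containing the singular points implies forbidden.} Suppose $A$ contains every singular point of $f_a$. Let $\gamma$ be a path starting at $a$ that meets $A$ only possibly at $a$, and suppose $f_a$ has no regular continuation along $\gamma$. Define $t_0$ as the supremum of those $t$ for which a regular continuation along $\gamma|_{[0,t]}$ exists.
\begin{enumerate}
\item The set of such $t$ is open in $[0,1]$. A continuation up to time $t$ produces a convergent power series on a disc around $\gamma(t)$, and this series continues the germ a little further along $\gamma$.
\item Hence the supremum $t_0$ is not attained. By Definition~\ref{defn:singular-point-germ}, the point $\gamma(t_0)$ is then a singular point of $f_a$, so $\gamma(t_0)\in A$.
\item We have $t_0>0$, so $\gamma(t_0)$ can lie in $A$ only if it equals the exceptional point $a$.
\end{enumerate}
The clause ``except possibly at $a$'' in Definition~\ref{defn:forbidden-set} must be read as allowing only the starting point; with that reading, case (3) is excluded and we have a contradiction. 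So $A$ is forbidden.

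\textbf{Forbidden implies containing the singular points.} Suppose $A$ is forbidden but some singular point $b$ lies outside $A$. Let $\gamma$ be a path witnessing that $b$ is singular.
\begin{enumerate}
\item For $t<1$, let $r(t)>0$ be the radius of convergence of the germ obtained by continuing along $\gamma|_{[0,t]}$. This radius is bounded below on each compact subinterval $[0,1-\varepsilon]$, and it may tend to $0$ as $t\to 1$.
\item Choose a continuous $\varphi>0$ with $\varphi(t)<r(t)/3$ on $[0,1)$.
\item Apply Lemma~\ref{lemma:freeing-path-from-countable-set} on compact pieces to obtain a path $\hat\gamma$ that avoids $A\setminus\{a\}$ on $(0,1)$, stays within $\varphi$ of $\gamma$, starts at $a$ and ends at $b$. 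Near the ends, splice in short segments inside the relevant discs; this is possible because $a,b$ are fixed and $A$ is countable, so small segments avoiding $A$ exist.
\end{enumerate}
Since $A$ is forbidden, $f_a$ continues regularly along $\hat\gamma$ to a germ at $b$; let $\rho>0$ be its radius of convergence.

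\textbf{Main obstacle: comparing the two continuations.} The key step is to show that the continuation along $\hat\gamma$ agrees with the one along $\gamma$. Because $|\gamma(t)-\hat\gamma(t)|<r(t)/3$, the germ at $\hat\gamma(t)$ obtained along $\hat\gamma$ is the re-expansion of the germ at $\gamma(t)$ obtained along $\gamma$. I would prove this by a connectedness (open-closed) argument on the set of $t$ where this holds, using the standard overlapping-discs uniqueness of analytic continuation. Now take $t$ close to $1$ so that both $\gamma(t)$ and $\hat\gamma(t)$ lie within $\rho/3$ of $b$. Then the germ along $\gamma$ at $\gamma(t)$ coincides with the expansion of the $\rho$-disc series at $b$. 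Hence it continues along $\gamma|_{[t,1]}$ all the way to $b$, which contradicts the choice of $\gamma$. So every singular point lies in $A$.

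\textbf{The ``in particular''.} If $f_a$ is an $\mathscr{S}$-germ, its singular set is at most countable. By the first implication, this singular set is itself a forbidden set. Conversely, if $f_a$ has a forbidden set, then by the second implication that at most countable set contains all singular points, so $f_a$ is an $\mathscr{S}$-germ.
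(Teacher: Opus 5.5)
The paper gives no proof of this theorem; it is imported from Khovanskii's book together with Lemma~\ref{lemma:freeing-path-from-countable-set}. Your argument is the standard one that lemma is made for, and it is correct:
\begin{itemize}
\item for the ``if'' direction, the first time at which continuation breaks down gives a singular point;
\item for the ``only if'' direction, you perturb a witness path off $A$, continue to $b$, and transport the germ at $b$ back to $\gamma$ by comparing continuations along nearby paths.
\end{itemize}
Your insistence that the exemption in Definition~\ref{defn:forbidden-set} applies only at $t=0$ is right, and it is not cosmetic. Take the germ of $\sqrt{\sqrt{z}-1}$ at $a=1$ with $\sqrt{1}=-1$. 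One turn around $0$ brings the inner root to $+1$, so $a$ is itself a singular point of this germ. The unit-circle loop meets the singular set $\{0,1,\infty\}$ only at the point $a$. So under the looser reading the singular set would not be forbidden, and the ``if'' direction would fail.

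There are two small repairs.

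\textbf{The avoidance condition on $\hat\gamma$.} In the construction of $\hat\gamma$ you only require avoiding $A\setminus\{a\}$ on $(0,1)$. With your own reading of the definition, the path must avoid all of $A$ for $t\in(0,1]$; in particular, if $a\in A$ it may not revisit $a$. Lemma~\ref{lemma:freeing-path-from-countable-set} and your end splices provide this anyway, and $b\notin A$ handles $t=1$. Relatedly, in the final comparison you need the whole tails $\gamma([t,1])$ and $\hat\gamma([t,1])$ inside the disc of radius $\rho$ about $b$, not just the points $\gamma(t)$ and $\hat\gamma(t)$. This holds for $t$ near $1$ by continuity.

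\textbf{The ``in particular'' clause.} Your last paragraph shows that $f_a$ has a forbidden set if and only if $f_a$ is an $\mathscr{S}$-germ. The statement speaks of a germ of an $\mathscr{S}$-function, which by Definition~\ref{defn:s-function} requires every regular germ of the multivalued function to be an $\mathscr{S}$-germ. Add the following observation:
\begin{itemize}
\item Let $g_b$ be the continuation of $f_a$ along $\delta$, and let $\beta$ witness a singular point $c$ of $g_b$.
\item Then the concatenation $\delta\beta$ witnesses $c$ as a singular point of $f_a$, by uniqueness of analytic continuation. The same holds in the other direction, using the reversed path $\delta^{-1}$.
\item Hence all germs equivalent to $f_a$ have the same singular set, and the complete analytic function generated by an $\mathscr{S}$-germ is an $\mathscr{S}$-function.
\end{itemize}
The converse, that a germ of an $\mathscr{S}$-function is an $\mathscr{S}$-germ, is immediate from the definition.
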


\begin{defn}
\label{defn:homomorphism-of-a-monodromy}
    To every path $\gamma$ we assign a map $\tau$ of $f_a$ to itself $($homotopic paths in $\mathbb{C}\setminus A$ give rise to the same $\tau)$. The operation of taking the product of maps, obtained through composition of paths, defines a homomorphism $\rho$ from the fundamental group of $\mathbb{C}\setminus A$ to the group $\aut(f_a)$ of invertible transformations of $f_a$. We call $\rho$ the homomorphism of \emph{$A$-monodromy}.
\end{defn}

\begin{remark}
    By Cayley's theorem every group is isomorphic to a group of permutations, i.e. a subgroup of the symmetric group $S_d$ on $\{1,2,\ldots,d\}$ $($here and throughout, we allow $d=\infty$ and will use it for countable groups$)$. When speaking about monodromy, we will often refer to the corresponding group of permutations rather than the group of automorphisms.
\end{remark}

Key observations about unsolvability are made possible through the study of the \emph{monodromy group} -- the group of all permutations of branches of the $\mathscr{S}$-function $f$ that correspond to loops around the points of the forbidden set $A$. We now proceed to give a precise definition.

\begin{defn}
\label{defn:monodromy-group}
    The \emph{monodromy group} of an $\mathscr{S}$-function $f$ with a forbidden set $A$ is the image of the fundamental group $\pi_1(\mathbb{C}\setminus A,a)$ in the group $\aut(f_a)$ under $\rho$.
\end{defn}

\begin{proposition}
\label{prop:monodromy-properties}
    The following properties hold.
    \begin{enumerate}[label=$($\arabic*$)$]
        \item The $A$-monodromy group of an $\mathscr{S}$-function is independent of the choice of point $a$.
        \item The $A$-monodromy group of an $\mathscr{S}$-function $f$ acts transitively on the branches of $f$.
    \end{enumerate}
\end{proposition}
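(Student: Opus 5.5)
Both parts are standard facts about analytic continuation, and I would prove them directly from Definition~\ref{defn:homomorphism-of-a-monodromy} and Remark~\ref{rem:another-germ}. The key tool is conjugation by a path.

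For (1), take two base points $a,b\in\mathbb{C}\setminus A$. The set $\mathbb{C}\setminus A$ is path connected, since $A$ is at most countable. Concretely, Lemma~\ref{lemma:freeing-path-from-countable-set} applied to the segment from $a$ to $b$ yields a path $\delta$ from $a$ to $b$ avoiding $A$; if necessary I would first use a small rotation so that the endpoints are fixed.

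Continuation along $\delta$ gives a map $\tau_\delta: f_a\to f_b$. It is a bijection, with inverse $\tau_{\delta^{-1}}$, because $A$ is a forbidden set (Theorem~\ref{thm:forbidden-sets-s-germ}) and so every germ continues along $\delta$. For a loop $\gamma$ at $a$ the loop $\delta^{-1}\gamma\delta$ is based at $b$, and
\[\rho_b(\delta^{-1}\gamma\delta)=\tau_\delta\circ\rho_a(\gamma)\circ\tau_\delta^{-1}.\]
This identity holds because continuation along a concatenated path is the composition of the continuations. The map $\gamma\mapsto\delta^{-1}\gamma\delta$ is an isomorphism $\pi_1(\mathbb{C}\setminus A,a)\to\pi_1(\mathbb{C}\setminus A,b)$. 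Hence the two monodromy groups are conjugate through the bijection $\tau_\delta$, so they are isomorphic as permutation groups, which is the intended sense of independence.

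For (2), the branches of $f$ at $a$ are by definition the germs in $f_a$ equivalent to a given germ (Definition~\ref{defn:equivalent-germs}), i.e. obtained from it by continuation along some path. Take two germs $g_a,h_a\in f_a$, with $h_a$ obtained from $g_a$ along a path $\gamma$ from $a$ to $a$.

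This $\gamma$ may pass through points of $A$, and this is the main obstacle. I would fix it as follows. Continuation along $\gamma$ gives a chain of discs covering $\gamma$, each carrying a regular element. Choose a continuous radius function $\varphi$ smaller than the local radii of convergence, and apply Lemma~\ref{lemma:freeing-path-from-countable-set} with the endpoints kept fixed to get $\hat\gamma$ avoiding $A$. Since $\hat\gamma$ stays inside the same chain of discs, continuation along $\hat\gamma$ also carries $g_a$ to $h_a$. Then $\hat\gamma$ is a loop in $\mathbb{C}\setminus A$, and $\rho(\hat\gamma)$ sends $g_a$ to $h_a$, which gives transitivity.

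The delicate points are checking that the perturbation lemma can be applied with the endpoints fixed, and that the perturbed path lies in the union of the continuation discs. I expect both to follow from a compactness argument (a Lebesgue number for the cover of $\gamma([0,1])$), with $\varphi$ bounded above by that number.
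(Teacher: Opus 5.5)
Your proposal is correct and follows essentially the same route as the paper. For (2) the paper likewise perturbs the connecting path off $A$ using Lemma~\ref{lemma:freeing-path-from-countable-set}, staying close enough that the continuation still carries one germ to the other; for (1) the paper only asserts that the claim follows from the same lemma, and your conjugation by a connecting path $\delta$ in $\mathbb{C}\setminus A$, with your attention to fixed endpoints and to choosing $\varphi$ below the radii of convergence, fills in details the paper's sketch leaves implicit.
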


\begin{proof}
    Both claims follow from Lemma~\ref{lemma:freeing-path-from-countable-set}. We give a proof of the second one.\\
    Let $f_{1,a}$ and $f_{2,a}$ be any two germs of the function $f$ at the point $a$. Since $f_{1,a}$ and $f_{2,a}$ are equivalent, there exists a path $\gamma$ such that under continuation along $\gamma$ the germ $f_{1,a}$ is transformed into $f_{2,a}$. By Lemma~\ref{lemma:freeing-path-from-countable-set} there exists an arbitrarily close path $\hat{\gamma}$ avoiding the set $A$. If $\hat{\gamma}$ is sufficiently close to $\gamma$, then the corresponding permutation of branches takes $f_{1,a}$ to $f_{2,a}$.
\end{proof}

\section{\Large Topological unsolvability in elementary functions}
Consider the equation $f(x)=a$, where $f:X\to\mathbb{C}$ is a meromorphic non-constant function defined on a connected open set $X\subseteq\mathbb{C}$, and $a\in\mathbb{C}$ is a parameter. By Theorem~\ref{theorem:identity-theorem} for any $a$ the set of roots is discrete, hence the inverse $f^{-1}$ is an $\mathscr{S}$-function.

Let $a$ draw a closed curve in the complex plane. Then the roots will also draw curves, as these are preimages of a path-connected set under a continuous map. However, these root curves are not necessarily closed, even though their union has to be a closed set, as it is a preimage of a closed set under a continuous map. We call these curves \emph{local branches}. Recall that a critical point is a root of $f'(x) = 0$.

Denote the local branches by $\{x_1(a_0),\ldots,x_d(a_0)\}$ for some regular value $a_0$, i.e. not a critical point. It is possible that this set is infinite (but countable) and we shall see this option more often in the equations we work with -- write conventionally $d=\infty$ in this scenario. For each $i=1,\ldots,d$ define $x_i^\gamma(a_0)$ as the set of values obtained by analytic continuation of $x_i(a_0)$ along a loop (closed curve) $\gamma$ around a critical point, starting at $a_0$. We require the following crucial condition: 

\begin{quote}
    The curve of $a$ must be such that \emph{any local branch does not contain a critical point}.
\end{quote}

\noindent In other words, the forbidden set from Definition \ref{defn:forbidden-set} is precisely the set of critical points of $f$. Because no root collisions occur along $\gamma$ (a double root must be a root of the derivative, but we assumed $f'\neq 0$ on $\gamma$), each $x_i^\gamma$ overlaps with some $x_j(a_0)$. Thus, $\gamma$ induces a well-defined permutation $\sigma_\gamma\in S_d$, such that
\[x_i^\gamma(a_0)=x_{\sigma_\gamma(i)}(a_0)\text{.}\]
(Intuitively, by using the continuation $\gamma$, we ``move'' from one local branch to another.) The image of the map $\rho:\pi_1(\mathbb{C}\setminus\{\text{critical points}\}, a_0)\to S_d$, given by $\rho(\gamma) = \sigma_\gamma$, is the monodromy group of $f$.

\begin{lemma}
\label{lemma:order-m-loop}
    Suppose $x_c$ is an isolated critical point of a non-constant meromorphic function $f:U\to\mathbb{C}$ and that it has order $m$, that is, $f'(x_c)=f''(x_c)=\ldots=f^{(m)}(x_c)=0$, but $f^{(m+1)}(x_c)\neq 0$. Then for any sufficiently small loop $\gamma$ around the corresponding critical value $a_c:=f(x_c)$, the analytic continuation of the inverse branches of $f$ along $\gamma$ induces a cycle of length $m+1$ on the local branches of the multivalued inverse function $x=f^{-1}(a)$ near $a_c$ and fixes the other preimages.
\end{lemma}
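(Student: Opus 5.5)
The plan is to reduce to the local normal form of $f$ at $x_c$. Since $f^{(k)}(x_c)=0$ for $1\le k\le m$ and $f^{(m+1)}(x_c)\neq 0$, $x_c$ is not a pole, $f$ is holomorphic near $x_c$, and Taylor's theorem gives
\[
f(x)-a_c=(x-x_c)^{m+1}h(x)
\]
with $h$ holomorphic and $h(x_c)\neq 0$. On a small disc $D$ around $x_c$ where $h$ has no zeros, we fix a holomorphic branch $h^{1/(m+1)}$. We then set $\phi(x)=(x-x_c)h(x)^{1/(m+1)}$. Because $\phi'(x_c)=h(x_c)^{1/(m+1)}\neq0$, the inverse function theorem makes $\phi$ a biholomorphism from a neighbourhood $V\subseteq D$ of $x_c$ onto a disc $\{|w|<r\}$. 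On $V$ we have $f(x)=a_c+\phi(x)^{m+1}$.

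For $0<|a-a_c|<r^{m+1}$, the preimages of $a$ inside $V$ are exactly $\phi^{-1}(w_k)$, $k=0,\dots,m$. Here the $w_k=\zeta^k w_0$, with $\zeta=e^{2\pi i/(m+1)}$, are the $m+1$ distinct $(m+1)$-st roots of $a-a_c$. Take $\gamma(t)=a_c+\varepsilon e^{2\pi i t}$ with $\varepsilon<r^{m+1}$. The continuation of $w_k$ along $\gamma$ is $w_k(t)=\zeta^k\varepsilon^{1/(m+1)}e^{2\pi i t/(m+1)}$, which ends at $w_{k+1}$ (indices mod $m+1$). Since $\phi^{-1}$ is single-valued and holomorphic, the local branches $x_k(a)=\phi^{-1}(w_k)$ are permuted by the $(m+1)$-cycle $k\mapsto k+1$.

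Next I would treat the other preimages. Let $K=f^{-1}(\{|a-a_c|\le\varepsilon_0\})\setminus V$. I need every preimage of $a$ outside $V$ to be a non-critical point, and also that no critical value other than $a_c$ lies in the disc. Then, by the holomorphic inverse function theorem and the description of $\sigma_\gamma$ given before the lemma, each such branch continues single-valuedly over the whole disc $|a-a_c|<\varepsilon$. Since the disc is simply connected, continuation around $\gamma$ returns each of these branches to itself. It also never enters $V$, because preimages in $V$ are accounted for by the $m+1$ branches above.

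This is the main obstacle. ``Sufficiently small'' has to be chosen so that $a_c$ is an isolated critical value and no other critical point (possibly infinitely many, accumulating at the boundary of $X$) has value in the disc. I will state this as the standing assumption implicit in the lemma: $\gamma$ lies in a punctured disc containing no other critical values, as required by the condition that local branches avoid critical points. This is consistent with the forbidden set being at most countable, so such a small disc can be chosen around $a_c$ avoiding the other critical values whenever $a_c$ is isolated in $B$.

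If other critical points share the value $a_c$, the same local analysis applies near each of them, giving disjoint cycles. The lemma's claim of a single cycle is then read as the cycle attached to $x_c$, with the preimages near other points of $f^{-1}(a_c)$ untouched by this particular cycle. If needed, I would also remark that distinct critical points with the same value would only add further disjoint cycles.
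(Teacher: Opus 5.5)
Your local analysis is correct and is the same normal-form argument the paper relies on when it cites Ahlfors: writing $f=a_c+\phi^{m+1}$ near $x_c$ with $\phi$ biholomorphic gives the $(m+1)$-cycle. That citation covers only this local part.

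The gap is in the clause about the other preimages. You argue that if the disc contains no critical value other than $a_c$, and no critical point other than $x_c$ lies over $a_c$, then the inverse function theorem lets every branch starting outside $V$ continue single-valuedly over the whole disc. Simple connectivity would then return it to itself. But the inverse function theorem only gives local inverses. A continuation along a path in the disc can also break down because the branch escapes to $\infty$ or to $\partial X$, i.e.\ because the disc contains an asymptotic value of $f$. Nothing in your hypotheses rules this out.

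Concretely, take $f(x)=x^2e^x$ on $\mathbb{C}$, $x_c=0$, $m=1$, $a_c=0$. The only other critical value is $f(-2)=4e^{-2}$ and $f^{-1}(0)=\{0\}$, so both of your conditions hold for $\varepsilon<4e^{-2}$. However, $f(x)\to 0$ as $x\to-\infty$. Consider the component of $\{0<|f(x)|<\varepsilon\}$ lying in $\mathrm{Re}\,x\ll 0$. It is simply connected and free of critical points, and since the finite singular values of $f$ are only $0$ and $4e^{-2}$, it covers the punctured disc. Hence it is the universal covering. A small loop around $0$ therefore acts on the preimages there as an infinite shift, not as the identity (heuristically, the solutions of $x+2\log x=\log a+2\pi i k$ move from $k$ to $k+1$). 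So your step fails, and the statement's ``fixes the other preimages'' is false without an additional hypothesis.

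What is missing is path lifting. Besides your conditions on critical values, you need that the closed disc $|a-a_c|\le\varepsilon$ contains no asymptotic value of $f$. This means no path in $X$ that leaves every compact subset has $f$ tending to a point of the disc along it. Under that hypothesis, a lift of a path in the disc starting outside $V$ can only stop by leaving every compact subset of $X$, which would produce such an asymptotic value. So all those branches continue over the whole disc, and your monodromy-theorem argument goes through.

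Compactness of your set $K$ would be one sufficient way to guarantee this. But $K$ is not compact in general (not in the example above, nor for periodic $f$), and your argument never actually uses it.

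Your other two observations are correct: $a_c$ must be isolated among the critical values, and other critical points over $a_c$ contribute further cycles. Together with the asymptotic-value condition, these are hypotheses the lemma tacitly needs.
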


\begin{proof}
    See \cite[p. 304]{Ahlfors1979}. 
\end{proof}

If the region enclosed by $\gamma$ does not contain any critical points, then $\gamma$ is homotopic to a constant map, so none of the roots will be permuted. Therefore it suffices to consider curves which move up to a critical point, make a circle (or a circular arc) around it and then return to the base point to induce a nontrivial permutation.

\begin{lemma}
\label{lemma:induced-cyclic-group-by-elementary-functions}
    The monodromy groups of $\exp$ and $\ln$ are isomorphic to $\mathbb{Z}$.
\end{lemma}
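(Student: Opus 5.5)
Since $\exp$ is single-valued on $\mathbb{C}$, the statement only has content for $\ln$. For $\exp$ the monodromy group is read, in the sense of the discussion before Lemma~\ref{lemma:order-m-loop}, as the monodromy of its multivalued inverse, namely $\ln$. The plan is to compute this group directly from the definition of $\rho$ in Definition~\ref{defn:homomorphism-of-a-monodromy}.

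First I fix the forbidden set. The equation $\exp(x)=a$ has no critical points because $\exp'=\exp\neq 0$. The only obstruction to continuing a germ of $\ln$ is the point $a=0$, together with $\infty$ on $\mathbb{CP}^1$. So I take $A=\{0\}$ and work in $\mathbb{C}\setminus\{0\}$. Its fundamental group, based at $a_0=1$, is infinite cyclic, generated by the loop $\gamma(t)=e^{2\pi i t}$.

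Next I describe the branches. The germs of $\ln$ at $a_0=1$ are the germs $\ln_k$ with $\ln_k(1)=2\pi i k$ for $k\in\mathbb{Z}$, that is, the local branches $x_k(a_0)$ of $\exp(x)=1$. They are in bijection with $\mathbb{Z}$. I then continue $\ln_k$ along $\gamma$ by the explicit formula $\ln_k(\gamma(t))=2\pi i t+2\pi i k$. Its endpoint value is $2\pi i(k+1)$, so the continuation of $\ln_k$ is $\ln_{k+1}$. Hence $\rho(\gamma)$ is the shift $\sigma:k\mapsto k+1$.

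Since $\pi_1$ is generated by $\gamma$, the monodromy group is $\langle\sigma\rangle\le\sym(\mathbb{Z})$. The permutation $\sigma^n$ is translation by $n$, which is the identity only for $n=0$. So $\sigma$ has infinite order and the group is isomorphic to $\mathbb{Z}$. The same group serves as the monodromy of the inverse of $\exp$, which gives the claim for both functions.

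The only delicate point is justifying that $A=\{0\}$ is really a forbidden set in the sense of Definition~\ref{defn:forbidden-set}, i.e.\ that $\ln$ continues along every path avoiding $0$. This follows by lifting the path through the covering map $\exp:\mathbb{C}\to\mathbb{C}\setminus\{0\}$. Equivalently, one can use the integral $\ln(a)=\int_1^a dz/z$ and the fact that $1/z$ is holomorphic off $0$.
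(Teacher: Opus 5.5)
Your proof is correct and takes essentially the same approach as the paper. Both read the monodromy of $\exp$ as that of its inverse, compute that a loop around $0$ sends the branch $\ln a + 2\pi i k$ to $\ln a + 2\pi i (k+1)$, and conclude the group is infinite cyclic (the paper bases its loop at $-1$ for $\exp$ and at $1$ for $\ln$). Your version is slightly more complete, since you note that $\pi_1(\mathbb{C}\setminus\{0\},1)$ is generated by this single loop, so the monodromy group equals $\langle\sigma\rangle$ rather than merely containing it, and you justify that $\{0\}$ is a valid forbidden set.
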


\begin{proof}
    The image of $\exp$ is $\mathbb{C}^\times = \mathbb{C} \setminus \{0\}$. Since by Proposition~\ref{prop:monodromy-properties} monodromy groups are in general independent of the choice of starting point, we may assume $a$ traverses a unit circle $\gamma$ around the origin, starting at $-1$. The local branches are $x_k(a) = \ln a + 2k\pi i$, $k\in \mathbb{Z}$. For $a = e^{i\theta}, \theta \in (-\pi, \pi)$ this becomes $x_k(a) = (2\pi k + \theta)i$. If we analytically continue along $\gamma$, then as $\theta$ runs through $(-\pi, \pi)$, for any $k$ the preimage $x_k(a)$ moves to $x_{k+1}(a)$. This corresponds to the permutation in which $k$ is mapped to $k+1$ for any $k$ and the subgroup generated by it is isomorphic to $\mathbb{Z}$.

    Recall that the values of $\ln z$ for any $z\in \mathbb{C}\setminus \{0\}$ is $\ln |z| + (\arg z + 2\pi n)i$, $n\in \mathbb{Z}$. Let $a$ traverse a unit circle $\gamma$ around the origin, starting at $1$, i.e. $a=e^{i\theta}$, $\theta \in [0, 2\pi)$.  Encircling the origin with winding number $w$, the logarithm is incremented by $2w\pi i$ and the monodromy group is the infinite cyclic group, isomorphic to $\mathbb{Z}$.
\end{proof}

To understand the solvability of the group in the general setting, it will be important only that the groups of $\exp$ and $\ln$ are abelian. For a function call \textit{depth} the number of nested elementary (exponential or logarithmic) extensions needed to obtain a differential field in which the functions is defined, as in Definition \ref{defn:elementary-function}.

\begin{proposition}
\label{prop:solvable-depth-n}
    Let $N$ be a positive integer and $f$ be a function whose inverse is a composition of elementary functions of depth not exceeding $N$. Then the monodromy group of $f$ is solvable.
\end{proposition}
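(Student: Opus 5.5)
We prove the statement by induction on the depth $N$ of the elementary tower expressing the inverse $g=f^{-1}$, following Khovanskii's scheme. The invariant we carry through the induction is stronger than the conclusion: every function $h$ obtainable at depth at most $k$ is an $\mathscr{S}$-function, and its monodromy group with respect to a suitable countable forbidden set $A_k$ is solvable of depth at most $k$ in the sense of Definition~\ref{defn:solvable-group}. That $h$ is an $\mathscr{S}$-function at every stage follows from Theorem~\ref{thm:stability-s-function}, since arithmetic operations, $\exp$ and composition preserve the class $\mathscr{S}$, and logarithms are integrals of meromorphic expressions. Countable unions of forbidden sets remain countable, so by Theorem~\ref{thm:forbidden-sets-s-germ} we may enlarge the forbidden set at each step to a common countable set $A$. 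Lemma~\ref{lemma:freeing-path-from-countable-set} guarantees that all loops can be taken in $\mathbb{C}\setminus A$.

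For the base case, the functions of $K_0=\mathbb{C}(x)$ are single-valued, so their monodromy group is trivial. For the inductive step, let $K_{i+1}=K_i(y)$ with $y$ satisfying either $D(y)=uy$ (exponential case) or $D(y)=u$ (logarithmic case), where $u\in K_i$.

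\textbf{Exponential case.} Continue a germ of $y$ along a loop $\gamma$ in $\mathbb{C}\setminus A$ whose image in the monodromy group $G_i$ of $K_i$ is trivial. Then $u$ returns to itself, so $y^\gamma$ solves the same equation $D(y^\gamma)=uy^\gamma$. Hence $y^\gamma=c_\gamma y$ for a constant $c_\gamma\in\mathbb{C}^\times$.

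\textbf{Logarithmic case.} For the same kind of loop, $y^\gamma=y+c_\gamma$ with $c_\gamma\in\mathbb{C}$.

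In both cases, the kernel $H$ of the natural surjection $G_{i+1}\to G_i$ embeds into a product of copies of the abelian group $\mathbb{C}^\times$ or $\mathbb{C}$: an element of $H$ is determined by the constants $c_\gamma$ it induces on the germs of $y$, and these compose multiplicatively, respectively additively. This mirrors Lemma~\ref{lemma:induced-cyclic-group-by-elementary-functions}, where $\exp$ and $\ln$ have abelian monodromy $\mathbb{Z}$. So $G_{i+1}$ is an extension of the solvable group $G_i$ by an abelian group, and is therefore solvable of depth at most one more. Elements of $K_{i+1}$ are rational expressions in $y$ over $K_i$, so their monodromy groups are quotients of $G_{i+1}$. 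Since quotients of solvable groups are solvable, the invariant is preserved.

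To finish, we relate this to the monodromy group of $f$ in the sense of the section above. The branches of $g=f^{-1}$ at $a_0$ are the local branches $x_i(a_0)$, and the permutation $\sigma_\gamma$ is exactly the action of $\gamma$ on the germs of $g$. The monodromy group of $f$ is therefore a quotient of the monodromy group of the whole tower, possibly after passing to a larger forbidden set containing the critical values. Passing to a larger forbidden set only replaces the group by a quotient of the group for that larger set, which is still solvable.

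The main obstacle is making the kernel argument rigorous for infinitely many branches. One must check that a loop acting trivially on the germs of $K_i$ really fixes $u$ on every germ simultaneously, and that the resulting constants $c_\gamma$ form a homomorphism from $H$ into a direct product of abelian groups rather than merely an abstract set map. The approach I would take first is to index by pairs (germ of $K_i$, germ of $y$) and verify the homomorphism property directly via composition of paths, using the homotopy invariance of continuation in $\mathbb{C}\setminus A$ given in Definition~\ref{defn:homomorphism-of-a-monodromy}.
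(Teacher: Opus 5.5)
Your proof is correct, but it is organized differently from the paper's. The paper inducts on composition depth. It writes $f^{-1}=\phi\circ g$ with $\phi$ of depth one, uses Lemma~\ref{lemma:induced-cyclic-group-by-elementary-functions} to say that $\exp$ and $\ln$ have monodromy $\mathbb{Z}$, and then simply asserts that composing with $\exp$ or $\ln$ ``adds solvable monodromy''. You instead induct along the differential-field tower, and you actually prove that step: the kernel of $G_{i+1}\to G_i$ embeds into a product of copies of $\mathbb{C}^\times$ or $\mathbb{C}$, which is Khovanskii's standard argument.

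What your route buys:
\begin{itemize}
\item \emph{Rigor.} Knowing separately that the two pieces of $\phi\circ g$ have solvable monodromy does not immediately give solvability of the composite. It needs an extension argument of precisely your type, and the paper omits it.
\item \emph{Fidelity to Definition~\ref{defn:exponential-extension}.} The condition $D(y)=uy$ means $y$ is $\exp$ of an integral, which can already be multivalued, e.g.\ $y=\sqrt{x}$ with $u=1/(2x)$. The paper's base case treats only $\exp(g)$ with $g$ rational, which is single-valued, so its monodromy is trivial rather than $\mathbb{Z}$.
\item \emph{Arithmetic for free.} Sums, products and quotients of adjoined functions are handled automatically, since elements of $K_{i+1}$ are rational in the generators. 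A composition induction would additionally need closure of solvable monodromy under arithmetic operations.
\item \emph{An explicit bound.} The group is solvable of depth at most the length of the tower.
\end{itemize}
The paper's route is shorter and fits the composition-chain viewpoint of Proposition~\ref{prop:finite-chain-all-primitive}, but its inductive step is asserted rather than proved.

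Two small remarks on your write-up. First, the bookkeeping you flag does go through. Index germs by pairs $(T,Y)$. The germ of $u$ is determined by $T$, which every element of the kernel fixes. Linearity of continuation shows that the constant attached to $(T,Y)$ by $\tau$ is unchanged when $Y$ is replaced by $cY$ (resp.\ $Y+c$). Hence $\sigma\mapsto\bigl(c_{(T,Y)}(\sigma)\bigr)_{(T,Y)}$ is an injective homomorphism into $\prod\mathbb{C}^\times$ (resp.\ $\prod\mathbb{C}$).

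Second, your sentence about enlarging the forbidden set has the direction wrong. Loops avoiding a larger set also avoid the smaller one, so you get a subgroup, not a quotient. You do not need that sentence, though: your induction runs verbatim with the common forbidden set enlarged by the critical values from the start.
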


\begin{proof}
    We proceed by induction on $N$. For $N=1$ the function $f^{-1}$ is defined by a single elementary extension over $\mathbb{C}(x)$, so it falls into one of the following:
        \begin{enumerate}[label=(\emph{\alph*})]
            \item Exponential: $f^{-1}=\exp(g(x))$. By Lemma~\ref{lemma:induced-cyclic-group-by-elementary-functions} the monodromy group of $f$ is isomorphic to $\mathbb{Z}$ -- an infinite abelian group, hence solvable.
            \item Logarithmic: $f^{-1}=\ln(g(x))$. By Lemma~\ref{lemma:induced-cyclic-group-by-elementary-functions} the monodromy group of $f$ is isomorphic to $\mathbb{Z}$ -- an infinite abelian group, hence solvable.
        \end{enumerate}

    Assume the result holds for depth $\leq N$. Let $f^{-1}=\phi\circ g$, where $\phi$ is an elementary function of depth $1$ and $g$ is of depth $N$. Then by the inductive hypothesis the monodromy groups $G_g$ and $G_\phi$ of $g^{-1}$ and $\phi^{-1}$, respectively, are solvable. Since $\phi$ is of depth $1$, it represents either an exponential or a logarithmic extension. By Lemma~\ref{lemma:induced-cyclic-group-by-elementary-functions} both $\exp$ and $\ln$ add solvable monodromy, so the monodromy group of $(\phi\circ g)^{-1}$ is also solvable.

    By induction, the inverse of any function defined by a tower of $N$ elementary extensions has a solvable monodromy group.
\end{proof}

\begin{remark}
    The group in the general setting as in Proposition~\ref{prop:solvable-depth-n} may not be abelian -- in some examples one naturally obtains the symmetric or alternating group on the set of positive integers.
\end{remark}

Now we give a property whose proof shows the crucial advantage of the set of roots being discrete.

\begin{lemma}
\label{lemma:complete-group-perm-transitive-on-roots}
    The complete group of permutations acts transitively on roots.
\end{lemma}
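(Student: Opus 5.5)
We read the lemma as follows. Fix a regular value $a_0$, i.e. $a_0$ is not a critical value of $f$ and not in the forbidden set $A$. Then the monodromy group $\rho(\pi_1(\mathbb{C}\setminus A,a_0))$ acts transitively on the set of roots $\{x_1(a_0),\ldots,x_d(a_0)\}$ of $f(x)=a_0$. Here $d=\infty$ is allowed, and $A$ is the (at most countable) set of critical values together with the images of the boundary behaviour of $f$.

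The plan is to connect any two roots by a path in $X$ and push it forward by $f$. Let $x_i=x_i(a_0)$ and $x_j=x_j(a_0)$ be two roots. Since $X$ is open and connected, it is path connected, so there is a path $\delta:[0,1]\to X$ with $\delta(0)=x_i$ and $\delta(1)=x_j$. We may first perturb $\delta$ to avoid the poles of $f$, which form a discrete set by Theorem~\ref{theorem:identity-theorem} applied to $1/f$.

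The discreteness of roots is used next. The critical points of $f$ form a discrete subset of $X$, by Theorem~\ref{theorem:identity-theorem} applied to $f'$; this uses that $f$ is non-constant. Hence they are at most countable. The same holds for the full preimage $f^{-1}(A)$: each fibre $f^{-1}(a)$ is discrete, hence countable, and $A$ is countable. We apply Lemma~\ref{lemma:freeing-path-from-countable-set} to $\delta$, with a small tolerance function $\varphi$ that vanishes only near the endpoints, to get a path $\hat\delta$ in $X$. We want $\hat\delta$ to avoid $f^{-1}(A)$ except possibly at its endpoints. The endpoints $x_i,x_j$ lie over $a_0\notin A$, so they are not in $f^{-1}(A)$ and the path can be kept with fixed endpoints. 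To see this, take $\varphi$ positive, perturb on $[\epsilon,1-\epsilon]$, and join to the endpoints by short segments inside small discs around $x_i,x_j$ on which $f$ is biholomorphic; these discs exist by the inverse function theorem since $f'(x_i),f'(x_j)\neq 0$. We also keep $\varphi$ smaller than the distance to $\partial X$, so that $\hat\delta$ stays inside $X$.

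Now set $\gamma=f\circ\hat\delta$. It is a closed loop based at $a_0$, since $f(x_i)=f(x_j)=a_0$, and it lies in $\mathbb{C}\setminus A$. Along $\gamma$ the map $t\mapsto\hat\delta(t)$ is a continuous choice of preimage of $\gamma(t)$. It is analytic, because $f'\neq0$ along $\hat\delta$, so locally $\hat\delta(t)=f^{-1}(\gamma(t))$ for a holomorphic local inverse. By uniqueness of analytic continuation, continuing the germ $x_i(a)$ along $\gamma$ therefore ends at the germ $x_j(a)$. Thus $\sigma_\gamma(i)=j$, and transitivity follows.

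The main obstacle is making precise that $\gamma$ avoids the whole forbidden set and not only the critical values. Points of $A$ can also arise as asymptotic values of $f$ or from approaching $\partial X$, where branches of $f^{-1}$ may escape. We handle these through the countability of their preimages and by keeping $\hat\delta$ in a compact subset of $X$. Compactness guarantees that no branch leaves $X$ along $\gamma$, so the continuation is genuinely regular at every time.
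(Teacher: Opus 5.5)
Your proof is correct and follows the same route as the paper. You join the two roots by a path in $X$, perturb it so that it avoids the bad points (which form a countable set), and push it forward by $f$ to a loop at $a_0$ whose continuation carries $x_i$ to $x_j$. You are more careful than the paper, which only removes critical points and leaves implicit both the avoidance of the full set $f^{-1}(A)$ and the fact that continuation along $f\circ\hat\delta$ tracks $\hat\delta$. One cosmetic fix: join $x_i,x_j$ to the perturbed path by arcs chosen to miss the countable set $f^{-1}(A)$, not by straight segments, since $A$ may accumulate at $a_0$.
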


\begin{proof}
    If we take a root and move it along a path $\gamma(t)$, connecting it with another root, then $a(t)=f(x(t))$ draws a closed curve, because both $x(0)$ and $x(1)$ are roots for the same $a$. The curve $\gamma$ must avoid points where $f(x)$ is critical, but since the set of these is discrete, such $\gamma$ always exists.
\end{proof}

\begin{theorem}
\label{thm:topological-galois-criterion}
    \emph{(Topological Galois criterion)} The equation $f(x)=a$ is solvable in elementary functions if and only if the monodromy group of $f(x)$ is solvable.
\end{theorem}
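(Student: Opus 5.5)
We prove the two implications separately. Necessity is assembled from results already stated; sufficiency is a Galois-type reconstruction of $f^{-1}$ from a subnormal series of the monodromy group. We expect the difficulty to sit entirely in the sufficiency direction, and in particular in the infinite case $d=\infty$.

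\emph{Necessity.} Suppose $f(x)=a$ is solvable, so $f^{-1}=g$ for an elementary $g$ lying in the top field $K_m$ of some tower $K_0\subset\cdots\subset K_m$ as in Definition~\ref{defn:elementary-tower}. By Theorem~\ref{thm:stability-s-function} (closure under meromorphic operations, exponentiation, and integration, the last covering logarithmic extensions), every element of every $K_i$ is an $\mathscr{S}$-function. So we may fix one countable forbidden set $A$ that works for all of them simultaneously, using Theorem~\ref{thm:forbidden-sets-s-germ}. Proposition~\ref{prop:solvable-depth-n} then shows, by induction on the depth $m$, that the $A$-monodromy group of $g$ is solvable: each exponential or logarithmic step contributes abelian monodromy by Lemma~\ref{lemma:induced-cyclic-group-by-elementary-functions}. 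Enlarging the forbidden set from the critical values of $f$ to $A$ only composes $\rho$ with a surjection of fundamental groups. The monodromy group of $f$ is therefore a quotient of a solvable group, hence solvable.

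\emph{Sufficiency.} Now let $G$ be the monodromy group, acting transitively on the branches $\{x_i(a)\}$ by Proposition~\ref{prop:monodromy-properties} and Lemma~\ref{lemma:complete-group-perm-transitive-on-roots}. Suppose $G$ is solvable with series $G=G_0\supset\cdots\supset G_\ell=e$ as in Definition~\ref{defn:solvable-group}. The plan is to mimic the classical proof that solvable Galois groups give radical towers.
\begin{enumerate}
\item For each $i$, let $K_i$ be the field of single-valued germs, taken $\mathscr{S}$-functions in $a$ built meromorphically from the branches, whose continuation along loops in $G_i$ leaves them fixed. Then $K_0$ consists of single-valued functions, and $K_\ell$ contains $x_1(a)$.
\item Since $G_{i-1}/G_i$ is abelian and acts on $K_i$ over $K_{i-1}$, decompose $K_i$ into eigen-elements: germs $y$ with $y^\gamma=\chi(\gamma)y$ for a character $\chi$ of $G_{i-1}/G_i$.
\item For each such eigen-element, $y'/y$ is fixed by $G_{i-1}$, so $y'/y\in K_{i-1}$. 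Hence $K_{i-1}\subset K_{i-1}(y)$ is an exponential extension in the sense of Definition~\ref{defn:exponential-extension}.
\item Adjoining finitely many such $y$ at each level yields an elementary tower containing $f^{-1}$, i.e. an elementary $g$ with $f(g(a))=a$.
\end{enumerate}

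\emph{Main obstacle.} We expect the hard step to be step 2 when $d=\infty$. For finite abelian quotients, the eigen-elements are obtained by the usual averaging $\sum_\gamma \overline{\chi(\gamma)}\,u^\gamma$ over the quotient, which is a finite sum. For an infinite quotient such as $\mathbb{Z}$, this sum is infinite. The natural substitute is to realise the generator's action as a deck transformation, as for $\ln$ in Lemma~\ref{lemma:induced-cyclic-group-by-elementary-functions}. One would then show that an invariant function of the intermediate covering is obtained by a single logarithmic extension. To do this one must also verify that the invariant fields $K_i$ are generated by finitely many elements. This finiteness is where the argument must use more than solvability of $G$, for instance growth or finiteness conditions on the singular set. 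If these cannot be secured in general, sufficiency as stated will need to be restricted accordingly. Only the necessity direction is needed for the main theorem.
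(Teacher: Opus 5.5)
\textbf{The gap is the sufficiency direction. It cannot be closed, because the ``if'' half of the statement is false in the generality stated.}

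The paper gives no proof of this criterion; it is only quoted. Theorem~\ref{thm:unsolvability-infinite-branch-locus} uses only the implication ``equation solvable $\Rightarrow$ monodromy solvable''. Your necessity argument is the standard Khovanskii route and matches what the paper offers in Proposition~\ref{prop:solvable-depth-n}. One caveat: the critical values of $f$ need not form a forbidden set for $f^{-1}$ (e.g.\ the asymptotic value $0$ of $e^x$). You should therefore compare two genuine forbidden sets; their monodromy groups coincide by Lemma~\ref{lemma:freeing-path-from-countable-set}.

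Your sketch fails even before the $d=\infty$ averaging issue you flag. In step 1 you silently treat the $G$-invariant field $K_0$ as if it were $\mathbb{C}(a)$. In fact it is a field of arbitrary single-valued functions of $a$. In the algebraic case the invariants are rational because there are finitely many branches of polynomial growth, and nothing replaces that here.

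Concretely, let $D$ be a small disc about $1$ on which $\Gamma$ is injective (possible since $\Gamma'(1)=-\gamma\neq 0$). Put $X=\Gamma(D)$ and $f=(\Gamma|_D)^{-1}$. Every $a\in D$ has exactly one root in $X$, so the monodromy is trivial. This holds also in Khovanskii's sense, since the germ of $f^{-1}$ is a germ of the single-valued function $\Gamma$. Yet the only solution $x=\Gamma(a)$ lies in no tower of exponential or logarithmic extensions of $\mathbb{C}(a)$. Such a tower has finite transcendence degree and is closed under $D$, so all its elements satisfy algebraic differential equations, and $\Gamma$ satisfies none (H\"older).

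Your proposed remedy for infinite abelian quotients also fails. Translation-type monodromy is realised by adjoining an antiderivative, and an antiderivative is not a logarithm. Take $f=\wp$ on $X=\mathbb{C}$. The map $\wp\colon\mathbb{C}\setminus\tfrac12\Lambda\to\mathbb{C}\setminus\{e_1,e_2,e_3\}$ is a regular covering with deck group $\{x\mapsto\pm x+\omega:\omega\in\Lambda\}\cong\Lambda\rtimes\{\pm1\}$. Hence the monodromy group is metabelian, in particular solvable.

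Running your steps along $G\supset\Lambda\supset e$ gives the exponential step $\mathbb{C}(a)\subset\mathbb{C}(a,\sqrt{P})$, with $P(a)=4a^3-g_2a-g_3$. It then requires adjoining $x(a)$ with $x'(a)=1/\sqrt{P(a)}$. That is an elliptic integral of the first kind, which is not elementary in the sense of the abstract (arithmetic, $\exp$, $\ln$). It becomes ``elementary'' only under the paper's over-broad Definition~\ref{defn:logarithmic-extension}, which admits every antiderivative, and the $\Gamma$ example defeats even that reading.

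So only the necessity half of the statement is a theorem. You were right to suspect that sufficiency must be restricted, but as written your proposal proves only necessity. No argument can deliver the ``if'' direction without extra hypotheses, such as algebraicity of $f^{-1}$.
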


\section{Primitivity and main proof}

\begin{defn}
\label{defn:branch-locus}
    Let $f:X\to\mathbb{C}$ be a non-constant meromorphic map from a connected open set $X$. Its \emph{branch locus}
    \[B=\{a\in\mathbb{C}:\ \exists x\in X:f(x)=a,f'(x)=0\}\]
    is the discrete set of critical values.
\end{defn}

When the branch locus is finite, the solvability of $f(x) = a$ reduces to computing a finite monodromy group. The theory on this subject is well developed $($for instance, see \cite{GoldmakherArnoldQuintic,khovanskii2019dimensionaltopologicalgaloistheory,Khovanskii2014}$)$, so for convenience we shall throughout work only on functions with an \textbf{infinite branch locus}. Note that this means that not only the critical points but also the critical values are infinitely many. For example, the function $f(x)=(\exp(x)-1)^2$ has infinitely many critical points of order $1$, but they all have only one corresponding critical value; hence, there is no contradiction between our result on unsolvability and the fact that $(\exp(x) - 1)^2 = a$ is solvable in elementary functions.

\smallskip

Primitivity of the monodromy group is an exceptionally powerful tool when seeking to prove unsolvability. As it turns out, every primitive monodromy group generated by infinitely many cycles (i.e. loops around infinitely many critical values) guarantees unsolvability, thanks to Wielandt's theorem.

\begin{proposition}
\label{prop:wielandt-corollary}
    Let $X \subseteq \mathbb{C}$ be an open connected set and $f:X\to\mathbb{C}$ be a meromorphic map with infinite monodromy group $G$. Suppose $G$ is primitive. Then $G$ contains an unsolvable subgroup, and $f(x)=a$ is unsolvable in elementary functions.
\end{proposition}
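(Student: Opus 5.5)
The plan is to reduce the statement to Wielandt's theorem (Theorem~\ref{thm:wielandt}) and then conclude with the topological Galois criterion (Theorem~\ref{thm:topological-galois-criterion}).

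\textbf{Step 1: the action.} Let $G$ act on the set $S$ of local branches $\{x_1(a_0),\ldots,x_d(a_0)\}$ over a regular value $a_0$. By Proposition~\ref{prop:monodromy-properties} this action is transitive. Since $G$ is infinite and is a faithful group of permutations of $S$, the set $S$ must itself be infinite. If $S$ were finite, $G$ would embed into $S_d$ and hence be finite.

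\textbf{Step 2: a nontrivial finitely supported element.} Since $G$ is infinite, it is nontrivial, so some loop acts nontrivially. The monodromy group is generated by the images of loops that travel from $a_0$ to near a critical value, circle it, and return. If all these generators acted trivially, $G$ would be trivial, so some critical value $a_c$ gives a nontrivial permutation.

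By Lemma~\ref{lemma:order-m-loop}, a small loop around $a_c$ permutes the preimages near a critical point $x_c$ of order $m$ in an $(m+1)$-cycle and fixes the other preimages near $x_c$. The main obstacle is that several critical points, possibly infinitely many, may share the value $a_c$. The resulting element is then a product of disjoint cycles, one for each critical point over $a_c$, and its support could be infinite.

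To handle this, I would first argue that the critical points over $a_c$ are isolated, and that only finitely many of them can lie over a fixed value in any compact part of $X$. I would then invoke the paper's standing assumption that the forbidden set and the loops are chosen generically. If even that fails, I would use an alternative route: take a generator $g$ whose cycles all have length at most some $M$. Then $g^{M!}=1$, which is useless. Instead I would commute $g$ with a conjugate $hgh^{-1}$ that moves only one of its cycles, which yields a nontrivial element of finite support. Wielandt's theorem needs only a single such element, so either route suffices.

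\textbf{Step 3: conclusion.} With $S$ infinite, $G$ primitive, and a nontrivial finitely supported element in $G$, Theorem~\ref{thm:wielandt} gives $\alt(S)\leq G$. For infinite $S$, the group $\alt(S)$ contains $A_5$, since it contains every finite alternating group on finite subsets of $S$. Therefore $\alt(S)$ is not solvable, and it is an unsolvable subgroup of $G$. Subgroups of solvable groups are solvable, so $G$ is unsolvable. By Theorem~\ref{thm:topological-galois-criterion}, or equivalently by the contrapositive of Proposition~\ref{prop:solvable-depth-n}, the equation $f(x)=a$ is unsolvable in elementary functions.
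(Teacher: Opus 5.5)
Your overall route is the paper's: find a nontrivial finitely supported element, apply Theorem~\ref{thm:wielandt}, then conclude with Theorem~\ref{thm:topological-galois-criterion}. Steps 1 and 3 are fine. The gap is Step 2, and you have located it correctly but not closed it.

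A small loop around a critical value $a_c$ contributes one cycle for each critical point in $f^{-1}(a_c)$. Its support is therefore infinite whenever that fibre contains infinitely many critical points; for $\cos x$, the loop around $1$ is a product of infinitely many disjoint transpositions.

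\textbf{Route 1 does not help.} Discreteness of critical points on compact sets says nothing about the whole fibre. There is also no genericity to invoke: the critical values and their fibres are fixed by $f$, and the paper's only condition on loops is that they avoid the forbidden set. Any loop winding once around $a_c$ alone acts on all critical points over $a_c$ simultaneously.

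\textbf{Route 2 is circular.} It presupposes some $h\in G$ with $hgh^{-1}$ agreeing with $g$ outside a single cycle, which is essentially the finitary element you are trying to produce. Nothing in your sketch derives it from primitivity, and it is not available in general. The monodromy group of $\cos x$ is $D_\infty$ acting regularly on the fibre, so every nontrivial element is fixed-point-free. Nor is a bound $M$ on cycle lengths guaranteed, since the orders of the critical points over $a_c$ may be unbounded.

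\textbf{An earlier omission.} $G$ is generated by loops around all points of the forbidden set, including asymptotic values. For example, $\exp$ has no critical values, yet its group is $\mathbb{Z}$. Such loops may have infinite cycles, so even your claim that some critical value acts nontrivially needs justification.

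The paper's proof settles Step 2 with a single clause, deducing finite support from the finiteness of ramification indices. That deduction is valid only when the fibre over the chosen critical value contains finitely many critical points and no asymptotic tract. Your objection therefore applies to the paper's argument too, but your proposal does not remove it.

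To get a complete proof you need one of two things. The first option is an extra hypothesis: for instance, an isolated point $a_c$ of the forbidden set that is not an asymptotic value and whose fibre contains only finitely many critical points. Then by Lemma~\ref{lemma:order-m-loop} the small loop around $a_c$ is a nontrivial permutation of finite support, and your Steps 1 and 3 finish the job. The second option is a genuine argument that primitivity of $G$ forces a finitary element, which neither your sketch nor the paper provides.
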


\begin{proof}
    Since ramification indices are finite, the induced cycles in $G$ are of finite support; hence, by Theorem~\ref{thm:wielandt}, the group $G$ contains $\alt(\mathbb{Z}_{>0})$, which is unsolvable.
\end{proof}

Recall that in our setting, the function $f(x)$ in the equation $f(x)=a$ is meromorphic and elementary (hence finitely expressed). In the rest of this section, we will leverage this to prove that, even if the monodromy group of $f$ is imprimitive, its (un)solvability is always determined by the monodromy group of a function present in a decomposition of $f$ of the form $(g_1\circ g_2\circ\ldots\circ g_\ell)(x)$.

\smallskip

A result by Ritt \cite{ritt-a609405a-a74b-3dda-b565-a3d841154886} shows that, in the case of polynomials and rational functions, imprimitivity is equivalent to the existence of a nontrivial decomposition of $f$ as $(g\circ h)(x)$. A similar result holds in topological Galois theory.

\begin{proposition} \emph{\cite[Theorem 27]{Burda2012Topological}}
\label{prop:imprimitive-equiv-decomposition}
    Let $X \subseteq \mathbb{C}$ be an open connected set and $f:X\to\mathbb{C}$ be a meromorphic map with monodromy group $G$. Then $G$ is imprimitive if and only if $f$ can be expressed as a composition $(h\circ g)(x)$ of two meromorphic functions $g:X\to Y$ and $h:Y\to\mathbb{C}$ for some open connected set $Y\subseteq\mathbb{C}$. Moreover, the monodromy groups of $g$ and $h$ are quotients of $G$.
\end{proposition}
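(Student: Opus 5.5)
The approach is to identify a nontrivial block system for $G$ with the fibres of a factorisation $h\circ g$, in the same way as in Ritt's correspondence for rational functions. Throughout, fix a regular value $a_0\notin B$. The set of branches of $f^{-1}$ at $a_0$ is then the fibre $F=f^{-1}(a_0)\subseteq X$, and $G$ acts on it by path lifting, as in Definition~\ref{defn:homomorphism-of-a-monodromy}.

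\emph{($\Leftarrow$)} Suppose $f=h\circ g$ with $g:X\to Y$ and $h:Y\to\mathbb{C}$ meromorphic and both non-injective. For each $y\in h^{-1}(a_0)$, put $T_y=g^{-1}(y)\cap F$. The sets $T_y$ partition $F$.
\begin{itemize}
\item Take a loop $\gamma$ at $a_0$ avoiding the critical values of $f$. These contain the critical values of $h$ and the $h$-images of the critical values of $g$, and they form a countable set, so Lemma~\ref{lemma:freeing-path-from-countable-set} lets us perturb $\gamma$ off it.
\item Lift $\gamma$ first through $h$ and then through $g$. Two points of $F$ in the same $T_y$ lift through $h$ to the same path in $Y$, so they end in the same fibre of $g$.
\item Hence $\gamma$ maps each $T_y$ onto some $T_{y'}$, and the $T_y$ form a block system.
\end{itemize}
This system is nontrivial exactly when $1<|T_y|<|F|$. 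That holds because $g$ and $h$ are non-injective: transitivity of the monodromy of $g$ (Proposition~\ref{prop:monodromy-properties}) makes all the $T_y$ equinumerous. Sending the block $T_y$ to $y$ gives a surjection from $G$ onto the monodromy group of $h$. In the same way, restricting to the setwise stabiliser of a block gives the statement that the monodromy group of $g$ is a quotient.

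\emph{($\Rightarrow$)} Let $T\ni x_0$ be a nontrivial block, and let $H=\operatorname{Stab}_G(T)$. Then $H$ sits strictly between $\operatorname{Stab}_G(x_0)$ and $G$.

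Using the covering $f:X\setminus f^{-1}(B)\to\mathbb{C}\setminus B$, the subgroup $\rho^{-1}(H)\le\pi_1(\mathbb{C}\setminus B,a_0)$ determines an intermediate covering $Y^\circ\to\mathbb{C}\setminus B$. Through it the covering of $X\setminus f^{-1}(B)$ factors as $X\setminus f^{-1}(B)\xrightarrow{g^\circ}Y^\circ\xrightarrow{h^\circ}\mathbb{C}\setminus B$.

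Since $B$ is discrete, we fill in the punctures of $Y^\circ$ over $B$, as in the Riemann existence construction (cf.\ Lemma~\ref{lemma:order-m-loop}). Local monodromy at each critical value is a finite cycle product, so each puncture has a finite-order cyclic local model $z\mapsto z^k$ and can be filled. This yields a Riemann surface $Y$ with holomorphic $h:Y\to\mathbb{C}$. The map $g^\circ$ extends holomorphically across $f^{-1}(B)$ by boundedness and the removable singularity theorem; near poles we work in $\mathbb{CP}^1$ charts.

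\emph{Main obstacle.} The statement asks for $Y$ to be an open connected subset of $\mathbb{C}$, not just an abstract Riemann surface. I would try the following.
\begin{enumerate}
\item $Y$ is covered by the planar domain $X$ via $g$, so $Y$ is an open Riemann surface.
\item By uniformization, $Y$ is then the plane, the disc, or a quotient of one of these.
\item I would aim to show that $Y$ is planar (genus zero) by pushing forward the Jordan separation property of $X$ through $g$. I expect this step to need the most care, and the result may only hold after passing to a suitable planar chart or allowing $Y$ to be a Riemann surface.
\item Once $Y$ is planar, Koebe's planar uniformization embeds $Y$ conformally into $\mathbb{C}$, and composing with that embedding gives the required $g$ and $h$.
\end{enumerate}
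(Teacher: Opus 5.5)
Your construction is the same as the paper's. In ($\Leftarrow$) the blocks are the $g$-fibres over $h^{-1}(a_0)$. In ($\Rightarrow$) the stabiliser of a block defines an intermediate covering; the paper simply calls it $h:(Y,y_0)\to(Z,z_0)$ and never asks whether $Y$ lies in $\mathbb{C}$.

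You locate the difficulty correctly, but your way around it fails. Step~3, and Step~1 once you fill in over poles, is false: the surface attached to an arbitrary nontrivial block can be a compact torus. Take $X=\mathbb{C}$ and $f=\wp$ for a lattice $\Lambda$.
\begin{itemize}
\item $G\cong\Lambda\rtimes\{\pm1\}$ acts regularly on $\wp^{-1}(a_0)=(z_0+\Lambda)\cup(-z_0+\Lambda)$.
\item The two $\Lambda$-orbits form a nontrivial block system. Its intermediate surface is $\mathbb{C}/\Lambda$, or $\mathbb{C}/\Lambda$ minus one point if you do not fill in over $\infty$.
\item No factorisation with $Y\subseteq\mathbb{C}$ realises these blocks. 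By the identity theorem $g$ would be a $\Lambda$-periodic entire function, hence bounded, hence constant.
\end{itemize}
So no argument that pushes planarity forward through $g$ can work. What your construction actually proves is a factorisation through a Riemann surface $Y$, which is the form in which Ritt/Burda-type correspondences hold. Landing in $\mathbb{C}$ would need a further idea for choosing a good block system, and your plan has none. For $\wp$, for instance, the pairs $\{w,-w\}\subseteq\wp^{-1}(a_0)$ give the planar factorisation $\wp(z)=h(z^2)$.

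The ``moreover'' clause has a separate problem. Your stabiliser argument shows that the monodromy group of $g$ is the restriction of $H=\operatorname{Stab}_G(T)$ to $T$. That is a quotient of a \emph{subgroup} of $G$, not a quotient of $G$, and the stronger claim is false. For $g=x^3-3x$ and $h=(y-5)^2$ one gets $\mathrm{Mon}(h\circ g)=S_3\wr C_2$, of order $72$. Every nontrivial normal subgroup of this group contains $C_3\times C_3$, so $\mathrm{Mon}(g)=S_3$ is not a quotient of it. The subquotient statement is what you actually proved, and it is enough for the paper's solvability argument.

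Finally, $f:X\setminus f^{-1}(B)\to\mathbb{C}\setminus B$ is not a covering in general. You must also remove the asymptotic values and take closures: $\exp$ has $B=\emptyset$ but is not a covering of $\mathbb{C}$, and the critical values of $\sin x/x$ accumulate at $0$. Over such points the local monodromy can have infinite cycles, as for $\exp$ at $0$. So your puncture-filling with $z\mapsto z^k$ models is not available there.
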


\begin{proof}
    Denote $f:(X,x_0)\to(Z,z_0)$, where $X$ and $Z$ are topological spaces with some fixed points $x_0 \in  X$ and $z_0 \in Z$, respectively. We will also use this notation with $(Y,y_0)$.

    Firstly, suppose that $(X,x_0)\xrightarrow[]{f}(Z,z_0)$ can be decomposed into $(X,x_0)\xrightarrow[]{g}(Y,y_0)\xrightarrow[]{h}(Z,z_0)$. Then $f^{-1}(z_0)$ can be split into blocks of preimages under $g$ of the points in $h^{-1}(z_0)$. These blocks are always permuted among themselves by any loop in $(Z,z_0)$, thus not all blocks are trivial.

    In the other direction, let $G$ act imprimitively on $f^{-1}(z_0)$. Then there exist blocks $B_0,B_1,\ldots,B_n$, not all trivial, where without loss of generality $x_0 \in B_0$. Denote by $h:(Y,y_0)\to(Z,z_0)$ the map that stabilizes the block $B_0$. Moreover, by the definition of a block, if a loop stabilizes $x_0$, it must also stabilize $B_0$; hence, the induced homomorphism of $f$ is contained in the induced homomorphism of some $g:(X,x_0)\to(Y,y_0)$ and factors through $h$, finishing the proof.
\end{proof}

The action of decomposing a meromorphic function with an imprimitive monodromy group can be repeated on the members of the decomposition until a chain of meromorphic functions with primitive monodromy groups is obtained.

\begin{proposition}
\label{prop:finite-chain-all-primitive}
    Let $X \subseteq \mathbb{C}$ be an open connected set and $f:X\to\mathbb{C}$ be an elementary meromorphic function. Then $f$ can be expressed as a finite chain
    \[f(x)=(g_1\circ g_2\circ\ldots\circ g_\ell)(x),\ \ell\in\mathbb{Z}_{>0}\text{,}\]
    where the monodromy group of each $g_i$, $1\leq i\leq\ell$, is primitive and is also a quotient of the monodromy group of $f$.
\end{proposition}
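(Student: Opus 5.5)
The plan is to iterate Proposition~\ref{prop:imprimitive-equiv-decomposition} and control termination with a finiteness invariant taken from the elementary structure of $f$.

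If the monodromy group $G$ of $f$ is primitive, take $\ell=1$ and $g_1=f$. Otherwise Proposition~\ref{prop:imprimitive-equiv-decomposition} gives a nontrivial decomposition $f=h\circ g$ with $g:X\to Y$ and $h:Y\to\mathbb{C}$ meromorphic on open connected sets. It also says the monodromy groups of $g$ and $h$ are quotients of $G$. We then apply the same dichotomy to $g$ and to $h$ separately, and repeat. Since a quotient of a quotient of $G$ is again a quotient of $G$, every factor produced at any stage has monodromy group that is a quotient of $G$. This gives the second requirement of the statement for free. Composition is associative, so at every stage $f$ is written as $g_1\circ\cdots\circ g_k$, where some $g_i$ may still be imprimitive. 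If the process stops, every factor is primitive, and the claimed chain exists.

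The crux is termination, i.e.\ that $k$ stays bounded. We would attach to each factor a positive integer complexity $c(\cdot)$ that is strictly superadditive under nontrivial decomposition. Here ``nontrivial'' means that neither factor is a Möbius map, since otherwise the blocks produced are trivial.

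The natural candidate for $c$ is the depth of an elementary tower over $\mathbb{C}(x)$ needed to express the function, counting an algebraic step of degree $>1$ as well. For functions with finite fibres one could instead use the number of branches, which is multiplicative under composition. With infinitely many branches, though, the block sizes and block counts can both be infinite, so fibre cardinality alone is insufficient. That is exactly why the elementarity hypothesis is needed.

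So the key step is to show the following. If $f=h\circ g$ is a nontrivial meromorphic decomposition of an elementary $f$, then $g$ and $h$ can be taken elementary, and $c(g),c(h)\ge 1$ with $c(g)+c(h)\le c(f)$. To see this, we would read off from the block system the subfield of the differential field generated by $f$ that corresponds to $g$. This is the analogue of Lüroth's theorem, or of Ritt's argument for rational functions. We would then show that this subfield sits inside the tower $K_0\subset\cdots\subset K_m$ at some intermediate level, so that the chain of nontrivial decompositions injects into a chain of distinct intermediate fields.

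This is the main obstacle, and I would attempt it first by passing to the Galois correspondence between blocks containing a base point and subgroups between $G$ and the stabiliser $G_{x_0}$. The aim is to bound the length of any chain of such intermediate subgroups. Each imprimitive splitting inserts a new intermediate subgroup, so a bound $L$ on such chains (which I would try to extract from $m$) yields $\ell\le L$ and finishes the proof.
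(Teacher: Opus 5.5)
\textbf{The termination step is a genuine gap, and it cannot be filled.} Your skeleton is the same as the paper's: iterate Proposition~\ref{prop:imprimitive-equiv-decomposition} on the factors, and note that quotients of quotients are quotients. The paper settles termination only by asserting that it holds ``given that $f$ is elementary (i.e.\ finitely expressed)''. You are right that this is the crux, but your key step is false and no invariant can rescue it.

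Take $f=\exp$ on $X=\mathbb{C}$. By Lemma~\ref{lemma:induced-cyclic-group-by-elementary-functions}, its monodromy group is $\mathbb{Z}$ acting regularly on the fibre $\{\ln a+2\pi i k\}$. The blocks through $k=0$ are the subgroups $n\mathbb{Z}$, realised by $\exp=w^n\circ e^{x/n}$. The right factor $e^{x/n}$ has the same imprimitive monodromy as $\exp$, so the process $e^x=w^2\circ e^{x/2}=w^2\circ w^2\circ e^{x/4}=\cdots$ never stops. Each of your mechanisms fails on this example:
\begin{enumerate}
\item If $c(w^2)\ge 1$, then $c(w^2)+c(e^{x/2})\ge 2>1=c(e^x)$. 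If $c(w^2)=0$, then your requirement $c(h)\ge 1$ fails, and $c$ never decreases along $e^x,e^{x/2},e^{x/4},\ldots$.
\item $e^{x/2}\notin\mathbb{C}(x,e^x)$, so the field attached to a factor need not lie in the given tower. The right factors generate $\mathbb{C}(e^x)\subsetneq\mathbb{C}(e^{x/2})\subsetneq\mathbb{C}(e^{x/4})\subsetneq\cdots$, which no single finitely generated $K_m$ can contain.
\item $\mathbb{Z}\supset 2\mathbb{Z}\supset 4\mathbb{Z}\supset\cdots$ is an infinite chain of subgroups between $G$ and $G_{x_0}=1$, so no bound $L$ in terms of $m$ exists.
\end{enumerate}

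\textbf{$\exp$ has no finite chain of primitive factors at all.} Suppose $\exp=g_1\circ\cdots\circ g_\ell$, and let $j$ be maximal with $F_j:=g_j\circ\cdots\circ g_\ell$ non-injective. Then $F_{j+1}$ is an injective entire map, hence affine and onto $\mathbb{C}$, so $g_j$ has the same monodromy as $F_j$. The fibres of $F_j$ are blocks for the monodromy of $\exp$, hence cosets of $2\pi i n\mathbb{Z}$ for some $n\neq 0$. Therefore $F_j=\phi(e^{x/n})$ with $\phi$ injective on $\mathbb{C}^\times$, so $F_j$ is $e^{\pm x/n}$ up to an affine map. Its monodromy group is again $\mathbb{Z}$ acting regularly, which has the nontrivial block $2n\mathbb{Z}$ and is imprimitive.

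So the proposition is false in the stated generality, and the paper's proof has exactly the hole you isolated. What is missing is an additional hypothesis, not a cleverer invariant. For example, with finite fibres of size $d$ your multiplicative count does work. Each nontrivial splitting writes a degree as $d_1d_2$ with $d_1,d_2>1$, so at most as many factors arise as $d$ has prime factors counted with multiplicity.
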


\begin{proof}
    If the monodromy group of $f$ is primitive, we fix $g_1:=f$ and the proof is complete. Suppose that it is imprimitive. Then by Proposition~\ref{prop:imprimitive-equiv-decomposition} $f$ can be decomposed into $g_1\circ g_2$. Applying the same logic exhaustively to the members of the decomposition, given that $f$ is elementary (i.e. finitely expressed), we are guaranteed to reach a finite chain consisting only of functions with primitive monodromy groups.
\end{proof}

We are now ready to prove our main result.

\begin{theorem}
\label{thm:unsolvability-infinite-branch-locus}
    Let $X \subseteq \mathbb{C}$ be an open connected set and $f:X\to\mathbb{C}$ be an elementary meromorphic function with an infinite branch locus and infinite monodromy group $G$. Then the equation $f(x)=a$ is unsolvable in elementary functions.
\end{theorem}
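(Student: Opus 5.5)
We plan to combine Proposition~\ref{prop:finite-chain-all-primitive} with Proposition~\ref{prop:wielandt-corollary} and the topological Galois criterion (Theorem~\ref{thm:topological-galois-criterion}). The argument is by contradiction: suppose $f(x)=a$ is solvable in elementary functions, so that $G$ is solvable. Since $f$ is elementary, Proposition~\ref{prop:finite-chain-all-primitive} gives a decomposition $f=g_1\circ g_2\circ\cdots\circ g_\ell$. Each $g_i$ has a primitive monodromy group $G_i$, and each $G_i$ is a quotient of $G$. A quotient of a solvable group is solvable, so every $G_i$ is solvable.

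The second step is to find an index $i$ for which $G_i$ is infinite. For this we use the infinite branch locus. The critical values of $f$ are determined by the chain rule: $f'(x)=\prod_j g_j'\big((g_{j+1}\circ\cdots\circ g_\ell)(x)\big)$. Hence every critical value of $f$ has the form $(g_1\circ\cdots\circ g_{j-1})(b)$, where $b$ is a critical value of $g_j$. Since $B$ is infinite and $\ell$ is finite, some $g_j$ must have infinitely many critical values $b$ that lie in the range of the inner maps and whose images remain pairwise distinct.

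By Lemma~\ref{lemma:order-m-loop}, a small loop around each such critical value of $g_j$ induces a nontrivial cycle of finite length $m+1\ge 2$. This cycle acts on the fibre of $g_j$ over a base point. Loops around infinitely many distinct critical values give infinitely many such cycles. Their supports cannot all lie in a finite set, because the fibre points near distinct critical points of $g_j$ are distinct. Therefore $G_j$ acts on an infinite set, and it is infinite.

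We then apply Proposition~\ref{prop:wielandt-corollary}, or directly Theorem~\ref{thm:wielandt}, to $g_j$. The group $G_j$ is primitive on an infinite set and contains a non-identity finite-support permutation, namely one of the cycles above. So $\alt(S)\le G_j$, and $G_j$ is not solvable, which contradicts the solvability of $G_j$ obtained from $G$. By Theorem~\ref{thm:topological-galois-criterion}, $f(x)=a$ is then unsolvable.

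The main obstacle is the second step: ensuring that the primitive factor we pick has an \emph{infinite} fibre (infinite degree) and not merely infinitely many critical values. A primitive factor of finite degree with infinitely many branch points could still have a finite, possibly solvable, monodromy group. We will handle this first by showing that a factor $g_j$ with infinitely many distinct critical values must have infinitely many critical points with pairwise distinct images. Each such critical point contributes at least one fibre element near it over a base point, reached by transporting along paths avoiding the discrete critical set using Lemma~\ref{lemma:freeing-path-from-countable-set}. If this is delicate, we will fall back on the hypothesis that $G$ is infinite: some $G_i$ must be infinite, since $G$ embeds in the iterated wreath product of the $G_i$, and a finite iterated wreath product of finite groups is finite. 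We then apply Wielandt to that infinite $G_i$. Its finite-support elements come from Lemma~\ref{lemma:order-m-loop}, because meromorphic maps have critical points of finite order.
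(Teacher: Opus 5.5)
Your outline is the paper's proof: decompose $f$ into primitive factors by Proposition~\ref{prop:finite-chain-all-primitive}, use the chain rule to find a factor $g_j$ with infinitely many critical values, apply Wielandt to $G_j$, and pass non-solvability back to $G$. The chain-rule step is the paper's ``finite branch loci compose to a finite branch locus''. The paper invokes Proposition~\ref{prop:wielandt-corollary} for $g_j$ without checking that $G_j$ is infinite. You are right that this step needs an argument, but neither of your two arguments supplies it, so the proposal has a genuine gap there.

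\textbf{First argument.} Your first argument is purely local. The supports of the cycles lie in the fibre over the base point $a_0$, and which fibre points a loop around $b$ moves depends on the path from $a_0$ to $b$. Fibre points ``near distinct critical points'' are reached along different paths, so they may be the same points of $g_j^{-1}(a_0)$. The same reasoning applies verbatim to the projection $(a,w)\mapsto a$ on the Riemann surface $w^2=\sin a$. That map has infinitely many critical values $k\pi$, and each small loop gives a transposition. Yet its fibre has two points and its monodromy group is $\mathbb{Z}/2$; the surface has infinite genus. What excludes this for $g_j$ is a global fact that uses planarity of its domain. For a branched covering of finite degree $d$ by a planar domain, apply Riemann--Hurwitz on a finite-type exhaustion. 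The source pieces satisfy $\chi=2-c$, with at most $d$ preimage circles per boundary circle of the base. This bounds the total ramification by $2d-2$, so infinitely many critical points force $d=\infty$. Hence $G_j$ acts transitively on an infinite set.

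\textbf{Fallback.} Your fallback does not repair this. The wreath-product embedding is not among the paper's stated results. It would also require showing that the group induced on a block by its stabilizer lies in the inner factor's monodromy group. Even granting it, it only shows that \emph{some} $G_i$ is infinite, not necessarily $G_j$. Lemma~\ref{lemma:order-m-loop} yields finite-support elements only in factors that actually have critical points, and nothing guarantees this for that $g_i$. If $g_i$ has none, its monodromy comes from loops around asymptotic values, which can have infinite support. The translation $k\mapsto k+1$ in Lemma~\ref{lemma:induced-cyclic-group-by-elementary-functions} is an example. Theorem~\ref{thm:wielandt} needs three things in one and the same factor: primitivity, an infinite underlying set, and a nontrivial finite-support element. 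The fallback does not deliver all three together. You need to prove directly that the factor with infinitely many critical values has infinite degree, for instance by the Riemann--Hurwitz bound above.
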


\begin{proof}
    By Proposition~\ref{prop:finite-chain-all-primitive} we have $f(x)=(g_1\circ g_2\circ\ldots\circ g_\ell)(x)$, where each $g_i$ has a primitive monodromy group and $\ell$ is a positive integer. If every primitive factor $g_i$ had a finite branch locus, then the entire composition would have a finite branch locus (see, for instance, \cite{ritt-a609405a-a74b-3dda-b565-a3d841154886}), which contradicts the assumption that $f$ has an infinite branch locus. Therefore, at least one $g_j$ has an infinite branch locus. Note that by the definition of a meromorphic function, the monodromy group of $g_j$ contains an element with finite support. As a consequence, by Proposition~\ref{prop:wielandt-corollary} the monodromy group of $g_j$ contains $\alt(\mathbb{Z}_{>0})$ and is unsolvable. But by Proposition~\ref{prop:finite-chain-all-primitive}, the monodromy group of $g_j$ is a quotient of the monodromy group of $f$, which leads to unsolvability of $f$ and concludes the proof.
\end{proof}

\medskip 
We conclude with the following hypothesis, based on modest computational evidence.

\begin{conjecture}
\label{conj:any-solvable-tower-core}
    Let $X\subseteq\mathbb{C}$ be a connected open set and $f: X \to \mathbb{C}$ be a meromorphic elementary function. If the equation $f(x)=a$ is solvable in elementary functions, then there exists a finite sequence $g_1,g_2,\ldots,g_\ell$ of solvable functions of depth at most $1$, such that $f(x)=(g_1\circ g_2\circ\ldots\circ g_\ell)(x)\text{.}$
\end{conjecture}

\medskip

\noindent \textbf{Acknowledgments}. This research was supported by the Summer Research School at the High School Student Institute of Mathematics and Informatics (HSSIMI) under an ``Education with Science'' grant in Bulgaria and the provided atmosphere for work is greatly appreciated.

\nocite{*}
\bibliographystyle{plain}
\bibliography{refs_arxiv}

\end{document}